\theoremstyle{plain}
\theoremstyle{plain}
\newtheorem{theorem}{Theorem}[section]
\newtheorem{corollary}[theorem]{Corollary}
\newtheorem{lemma}[theorem]{Lemma}
\newtheorem{ass}{Assumption}
\theoremstyle{definition}
\newtheorem{remark}[theorem]{Remark}
\theoremstyle{definition}
\numberwithin{table}{section}
\numberwithin{equation}{section}
\numberwithin{figure}{section}
\newcommand{\R}{\mathbb{R}}
\mathchardef\emptyset="001F
\newcommand{\eps}{\varepsilon}
\renewcommand{\tilde}{\widetilde}
\newcommand{\loc}{\mathrm{loc}}
\renewcommand{\Sigma}{\boldsymbol{\sigma}}
\newcommand{\x}{\boldsymbol{x}}  
\newcommand{\y}{\boldsymbol{y}} 
\newcommand{\z}{\boldsymbol{z}}
\title[Semi-autonomous NODEs]{Universal Approximation of Dynamical Systems by Semi-Autonomous Neural ODEs and Applications}
\author[Z. Li, K. Liu, L. Liverani and E. Zuazua]{Ziqian Li$^1$$^2$}
\address{$^1$School of Mathematics, Jilin University, 2699 Qianjin Street, Changchun, 130012, Jilin, China.}
\address{$^2$Chair for Dynamics, Control, Machine Learning, and Numerics (Alexander von Humboldt Professorship), Department of Mathematics, Friedrich–Alexander-Universit\"at Erlangen–N\"urnberg, 91058 Erlangen, Germany.}
\author{Kang Liu$^3$}
\address{$^3$Institut de Mathématiques de Bourgogne, Université Bourgogne Europe, CNRS, 21000 Dijon, France.}
\author{Lorenzo Liverani$^2$}
\author{Enrique Zuazua$^2$$^4$$^5$}
\address{$^4$Departamento de Matem\'aticas, Universidad Aut\'onoma de Madrid, 28049 Madrid, Spain.}
\address{$^5$Chair of Computational Mathematics, Fundaci\'on Deusto. Av. de las Universidades, 24, 48007 Bilbao, Basque Country, Spain.}
\email{zqli23@mails.jlu.edu.cn, kang.liu@u-bourgogne.fr, lorenzo.liverani@fau.de}
\email{enrique.zuazua@fau.de}
\begin{document}

\begin{abstract}
In this paper, we introduce semi-autonomous neural ordinary differential equations (SA-NODEs), a variation of the vanilla NODEs, employing fewer parameters.
We investigate the universal approximation properties of SA-NODEs for dynamical systems from both a theoretical and a numerical perspective.
Within the assumption of a finite-time horizon, under general hypotheses we establish an asymptotic approximation result, demonstrating that the error vanishes as the number of parameters goes to infinity. Under additional regularity assumptions, we further specify this convergence rate in relation to the number of parameters, utilizing quantitative approximation results in the Barron space. Based on the previous result, we prove an approximation rate for transport equations by their neural counterparts.
Our numerical experiments validate the effectiveness of SA-NODEs in capturing the dynamics of various ODE systems and transport equations. Additionally, we compare SA-NODEs with vanilla NODEs, highlighting the superior performance and reduced complexity of our approach. 
\end{abstract}

\keywords{neural ODEs, universal approximation, Barron space, transport equations}
\subjclass[2010]{34A45, 41A25, 65D15, 65L09, 68T07}

\maketitle

\section{Introduction}\label{sec:1}
\subsection{Neural ODEs} Neural ordinary differential equations (NODEs) represent a groundbreaking fusion of deep learning and differential equations \cite{chen2018neural}. This innovative approach stems from the realization that residual neural  networks \cite{He_2016_CVPR} (ResNets) can be viewed as discrete approximations of continuous dynamical systems. The traditional NODE model rules the evolution of an absolutely continuous state trajectory $\x = \x(t): [0,T] \to\mathbb{R}^d$ 
via an ordinary differential equation parameterized by a neural network,
\begin{equation}
\label{eq:trad-NODE}
\begin{dcases}
	\dot{\x} =\sum_{i = 1}^P W_i(t)\circ\Sigma(A_i(t)\x+B_i(t)). \\
    \x(0) = x_0,
 \end{dcases}
\end{equation}
Throughout the paper, we will refer to this NODE formulation as \emph{vanilla NODE}.
Here, $A_i \in \mathbb{L}^\infty([0,T];\mathbb{R}^{d \times d}), W_i \in \mathbb{L}^\infty([0,T];\mathbb{R}^d),$ and $B_i \in \mathbb{L}^\infty([0,T];\mathbb{R}^d)$ for $i = 1, \ldots, P$ are
the parameters of NODE, and $\circ$ stands for the Hadamard product. For a precise definition of the notation used in this paper, we direct the reader to Section \ref{sec:prelim}. Building on the idea of NODEs as formal limits of ResNets, the number $P$ represents the number of neurons in each ``infinitesimally thin" layer of the network parametrized by $t \in [0,T]$. The vector function $\Sigma: \mathbb{R}^d \to \mathbb{R}^d$ acts componentwise on its input as the activation function $\sigma$, which can be any of the classical activation functions such as Sigmoid, ReLU, ReLU$^k$ etc.  

NODEs are a flexible model, that can be trained to interpolate even unstructured or rough dataset, especially when these are time-dependent. However, in order to quantify the precision of the synthetic model at hand, it is often reasonable to assume that the data is simply the realization of an underlying physical law, described by a generic dynamical system of the form
\begin{equation}
\label{benchmark}
    \begin{cases}
    \dot \z = f(\z, t),\\
    \z(0) = z_0.
    \end{cases}
\end{equation}
The accuracy of the model is then assessed by measuring its deviation from the expected dynamics. ODEs systems of this form appear in a huge number of applications, for instance,  the Hamiltonian system from mechanics,  the semidiscretization of non-stationary PDEs (e.g. with the  finite elements method, see \cite[Sec.\@ 8.6.1]{allaire2007numerical} for more details), etc. Besides, the presence of a time-dependent field allows us to take external sources into account. 
For this reason, the approximation of ODE systems can be considered as a benchmark problem, and it is pivotal to develop learning architectures able to perform efficiently. This is precisely the setting of this paper.  

\subsection{Main results} 
As continuous limits of ResNets, it is natural to take the coefficients of NODEs to be time-dependent.
However, this choice entails a great increase in the complexity of the model: in practical implementations of NODEs a layer is needed for every time step, so that the number of parameters depends linearly on the number of time steps. It is then reasonable to wonder whether it is possible to decrease this complexity, while retaining the core dynamical features that play a central role in concrete applications. Furthermore, the greatest part of the existing works concerning with NODEs are interested in optimizing the coefficients $W_i(t), A_i(t)$ and $B_i(t)$ in order to drive an initial distribution of points at time $t=0$ (corresponding to the input layer) to a final target at time $t=T$ (the final layer), with little to no regards to tracking the whole trajectory over the entire interval $[0,T]$. An exception here is given by the recent work \cite{ruiz2022interpolation}. Nevertheless, it seems reasonable to expect that NODEs should be able to approximate whole trajectories, and not simply the initial and final states. Prompted by these questions, in this article we focus on a particular instance of NODEs, namely, 
\begin{equation}\label{eq:SA-NODE}
\begin{dcases}
\dot{\x} = \sum_{i=1}^P W_i \circ \Sigma(A_i^1\x+A_i^2t + B_i), \\
    \x(0) = x_0.
\end{dcases}
\end{equation}
Note that the parameters are now completely time-independent. In fact, $t$ appears only as a multiplicative factor inside of the activation function. For this reason, we dub the equation \textit{semi-autonomous NODEs} (SA-NODEs). 
This specific structural choice is not arbitrary. Indeed, it is based on the classical universal approximation result by Pinkus \cite{pinkus}, stating that every vector field $f(z,t)$, continuous on a compact set, can be approximated to arbitrary precision in the $\mathbb{L}^\infty$ norm by a shallow (single-hidden-layer) neural network of the form
\[
f_{\Theta}(z,t) = \sum_{i=1}^P W_i \circ \Sigma(A_i^1 z+A_i^2t + B_i).
\]
We refer to Theorem \ref{th:pinkus} for the full statement. This is our starting point, naturally leading to our first main result, Theorem \ref{th:UAP}, which concerns the Universal Approximation Property (UAP) for SA-NODEs. Indeed, this can be obtained by combining Pinkus Theorem with Gr\"{o}nwall estimates. However, this concatenation is far from trivial. In fact, Pinkus Theorem can only be applied on compact sets. Therefore, in order to apply Gr\"{o}nwall estimates, it is crucial to identify a suitable compact set enclosing all the SA-NODEs solutions stemming from the approximation $f_\Theta$ of the real vector field $f$.

Our main theoretical contributions, besides Theorem \ref{th:UAP}, follow a similar inspiration, arising from suitable UAP of shallow neural networks (shallow NNs). We summarize these contributions in detail below.

\begin{enumerate}
     \item The already mentioned Theorem \ref{th:UAP} establishes the UAP of SA-NODEs for the approximation of dynamical systems of the form \eqref{benchmark}. Under the sole assumption of $f$ being continuous in time and uniformly Lipschitz in space (see Assumption \ref{ass1}), we show that for any given tolerance $\eps>0$, and any compact set $K\subset\R^d$ of initial data, there exist parameters $P\geq 1$ and $W_i, A_i^1, A_i^2, B_i$ such that every trajectory of the dynamical system with initial data in $K$ is approximated in $\mathbb{L}^{\infty}(0,T)$ (up to an error of $\eps$) by the corresponding SA-NODE trajectory starting from the same initial datum. Note that this result is not concerned only with the initial and final states of the system, but with the whole trajectory, which is considered as an extension to universal approximation results provided in \cite{ruiz2023neural}.
     \item Our second result provides an upper bound on the approximation rate of SA-NODEs in relation to their width $P$, as stated in Theorem \ref{th:rates}. For this purpose, we impose the further regularity assumption that $f$ lies in the local Sobolev space $\mathcal{H}^k_{\mathrm{loc}}$ with $k > (d+1)/2 + 2$ (see Assumption \ref{ass2}). Under this setting, let $\z_{z_0}$ and $\x_{z_0}$ denote the solutions of the true dynamic \eqref{benchmark} and the SA-NODE \eqref{eq:SA-NODE}, respectively, starting from a common initial point $z_0$. Then, we establish the following error estimate:

\begin{equation}\label{intro:eq:rate}
     \sup_{(z_0,t) \in K\times[0,T]}  \,\|\z_{z_0}(t) - \x_{z_0}(t) \| \leq \frac{C_{T, K, f}}{\sqrt{P}},
\end{equation}
where $C_{T,K,f}$ is a constant independent of $P$. 
Compared to classical interpolation using the finite element method, when the vector field is smooth enough, the SA-NODE approach is free from the curse of dimensionality (see Remark \ref{rem:FEM}). 

\item 
Building on the previous result, Theorem \ref{th:Trans} establishes a universal approximation result for the transport equation \eqref{eq:trans} (with the solution denoted by \(\rho\)) using its neural counterpart \eqref{eq:neural_trans} (with the solution denoted by \(\rho_{\Theta}\)):
\begin{equation}\label{intro:eq:trans}
       \sup_{t\in [0,T]} \mathbb{W}_1(\rho(\cdot, t), \rho_{\Theta} (\cdot,t) ) \leq \frac{C_{T, f,\rho_0}}{\sqrt{P}},
    \end{equation}
    where $\rho_0$ is the initial distribution of the transport equation, $C_{T, f,\rho_0}$ is a constant independent of $P$, and $\mathbb{W}_1(\cdot,\cdot)$ is the Wasserstein-1 distance \cite[Def.\@ 6.1]{villani2009optimal}. Let us mention that this result improves the findings in \cite{ruiz2024control}, where the authors consider the approximation of the terminal time distribution \(\rho(\cdot, T)\). It also enhances the results in \cite{elamvazhuthi2022neural}, which provide a similar universal approximation result (in the \(\mathbb{W}_2\) sense) for transport equations, but lack precision in the convergence rate.

\item Finally, we present a collection of numerical results and develop a thorough performance analysis of SA-NODEs. First, we highlight the connection between our main results and the training procedure of SA-NODEs in Section \ref{sec:control} by means of classical optimal control techniques. Then, we proceed by investigating the approximation capabilities of such equations, and compare them to that of vanilla NODEs. 
We observe that SA-NODEs outperform vanilla NODEs in several respects, with the number of neurons per layer ($P$) kept fixed for a fair comparison. First, SA-NODEs involve significantly fewer parameters, resulting in reduced training time and lower storage requirements. Second, their convergence rate with respect to the number of training epochs is faster. Third, SA-NODEs achieve accurate approximation results even with smaller datasets. Finally, they exhibit superior stability in approximating both ODEs and transport equations compared to vanilla NODEs.
\end{enumerate}

\subsection{Methodology}
Here we outline the core ideas behind the proofs of our main results. The full details are provided in Section~\ref{sec:proofs}.

\paragraph{Qualitative convergence}
As discussed earlier, the structure of the SA-NODE is derived from the Pinkus approximation of the vector field $f$ of the original ODE. However, since the NN approximation is not uniform over the entire space, it is necessary to identify a suitable compact set that simultaneously bounds the solutions of all SA-NODEs used to construct an \( \epsilon \)-approximation of the original ODE for sufficiently small \( \epsilon \). This compact set is determined using the compactness of the initial condition set \( K \) and an \emph{a priori} bound obtained in Lemma \ref{lm:stable} via a bootstrapping argument. Replacing \( f \) with its NN approximation on this compact set and applying Grönwall’s inequality then yields the qualitative convergence of the SA-NODE solutions.

\paragraph{Quantitative convergence}
 The convergence rate~\eqref{intro:eq:rate} arises from an \( \mathcal{O}(1/\sqrt{P}) \)-approximation of the vector field \( f \) on compact sets by shallow NNs in the \( \mathbb{L}^{\infty} \)-norm, where \( P \) denotes the number of neurons. This approximation holds for functions in the Barron space~\eqref{defn:Barron_space}; see Lemma~\ref{lm:Barron} and \cite[Thm.~2]{klusowski2018approximation}. Moreover, the Lipschitz constants of the NNs used are proved to be independent of \( P \). In Lemma~\ref{lm:Sobolev_Barron}, we show that the Sobolev space \( \mathcal{H}_{\loc}^{k} \), with \( k \ge (d+1)/2 \), embeds continuously into the Barron space. Consequently, when \( f \) lies in this Sobolev space, the desired NN approximations with uniform control over the Lipschitz constant are ensured, as stated in Corollary~\ref{cor:Barron_dd}. This uniform bound allows us to identify a suitable compact set in which all trajectories of SA-NODEs remain, enabling the application of a Grönwall-type argument to derive the estimate~\eqref{intro:eq:rate}.

\paragraph{Transport equation}
The convergence estimate~\eqref{intro:eq:trans} for the transport equation follows by applying the bound~\eqref{intro:eq:rate} to its characteristic ODE, together with the superposition principle and the definition of the Wasserstein-1 distance. Moreover, as noted in Remark~\ref{rem:Lp}, if the vector field is approximated in the $\mathcal{W}^{1,\infty}$-norm, an analogous rate holds in the $\mathbb{L}^{p}$-norm.

\color{black}
\subsection{Related works}
NODEs fit into the more general framework of data-driven techniques for system learning and identification. With respect to other state-of-the-art paradigms, NODEs are characterized by being fully data-driven, in that they do not require the introduction of a dictionary of candidate functions (such as SINDy or methods based on Koopman operators \cite{mauroy2020koopman}), nor a priori knowledge of the physical properties of the system (such as PINNs \cite{raissi2019physics}). 
The continuous-time modeling capability of NODEs makes them particularly advantageous for applications requiring smooth interpolations and handling of irregularly sampled data, such as time series analysis \cite{rubanova2019latent} and classification \cite{ruiz2023neural}.

When information on the underlying model is available, the flexibility of NODEs allows us to tailor the structure of the differential system \eqref{eq:trad-NODE} accordingly. This is the focus of the rapidly evolving field of Structure-Preserving Learning, whose goal is to enforce desired properties into the NODE. As an example, as suggested in \cite[Section 2.2.2]{kidgerthesis}, if conservation laws driving the dynamics are known, one might employ a Hamiltonian \cite{greydanus2019hamiltonian} or Lagrangian Neural Network \cite{cranmer2020lagrangian} to build a physically meaningful right hand side in \eqref{eq:trad-NODE}. Similarly, in the recent work \cite{loya2025structurepreservingneuralordinarydifferential}, the authors enforce the longtime stability of the NODE by choosing a specific structure. Other works in this direction are \cite{celle2, Celledoni2022StructurePN}, where the authors follow the opposite approach of building a structure-preserving neural network starting from the related NODE.

From a theoretical standpoint, one of the most appealing qualities of NODEs is that their differential structure makes them suitable to be investigated by means of analysis and optimal control techniques, with the overarching goal of providing a formal justification to the behavior of classical machine learning algorithms such as ResNets. 
Several works in this direction have populated the literature in recent years. 
Concerning the controllability of such equations, we recall \cite{esteve2023sparsity, weinan2017proposal}, as well as \cite{agrachev2022control}. In these papers, an in-depth analysis was conducted concerning the capabilities of different kinds of NODEs of approximating target profiles and driving inputs to final aimpoints, both in an exact and an approximate sense. Moreover, many efforts have been devoted to uncovering the relations between the norm of the controls $W_i, A_i, B_i$ and the precision of the approximation, as well as the relation between depth and width of the NODEs \cite{alvarez2024interplay}. A property that plays a fundamental role in all of these expositions is the time-dependence of the coefficients $W_i,A_i$ and $B_i$. This effectively allows to dynamically change the region of the state space that is being affected by the NODEs, in order to move only the required inputs to the wanted targets. 

The theoretical study of NODEs extends outside the realm of controllability. Without the claim of being exhaustive, we recall the works \cite{massaroli2020dissecting, sander2022residual}, dealing with the formalization of the nature of NODEs as limits of ResNets, as well as \cite{geshkovski2022turnpike}, concerning with the long-time behavior of such equations and the dependence of their approximation properties on the final time $T$.
Another notable contribution in this field is the work by Osher et al. \cite{elamvazhuthi2022neural}, which demonstrates the UAP of the transport equation corresponding to NODEs. They show that solutions of the continuity equation can be approximated by NODEs with piecewise constant training weights to achieve an arbitrary degree of closeness.

The main technique utilized in our article relies on the universal approximation property of shallow NNs, a well-studied topic in the literature. The first result can be traced back to the Wiener Tauberian Theorem \cite[Thm.\@ II]{wiener1932tauberian} in 1932, which covers a large class of activation functions. The UAP of Sigmoidal shallow NNs was demonstrated in the celebrated work \cite{CYB} in 1989. Extensions to multilayer perceptrons were made in \cite{hornik1991approximation}. A general UAP result for non-polynomial activation functions, including ReLU, was established in \cite{leshno1993multilayer}. For a comprehensive summary of universal approximation results over the past century, see \cite{pinkus}.

Regarding quantitative results, the approximation rate in the $\mathbb{L}^2$ sense for Sigmoidal shallow NNs was investigated for functions in spectral Barron spaces in \cite{barron1993universal}. Recent work \cite{ma2022barron} extends this result to the ReLU activation function, and sharper bounds on this approximation are proved in \cite{siegel2024sharp}. For precise estimates in the high-order Sobolev sense with the ReLU$^k$ activation function, see \cite{li2024two,li2024function}. The $\mathbb{L}^\infty$-approximation rate for ReLU networks plays a central role in the proof of Theorem~\ref{th:rates}, where we rely on the result from \cite{klusowski2018approximation}. A more precise approximation rate is provided in \cite{siegel2023optimal}, which yields a sharper convergence rate as discussed in Remark~\ref{rem:approx}. We refer to \cite{devore2021neural} for a good summary of quantitative approximation results.

\subsection{Outline of the Paper}
The paper is organized as follows. The forthcoming Section \ref{sec:prelim} introduces the notation and the preliminary definitions, and states the main results, which are then proved in Section \ref{sec:proofs}. Section \ref{sec:control} is dedicated to an in-depth explanation of how SA-NODEs are trained. In the subsequent Section \ref{sec:experiments}, we present our experimental setup and results, demonstrating the efficacy of SA-NODEs in several approximation scenarios. We draw some final conclusions and discuss potential directions for future research in Section \ref{sec:conclusion}.

\section{Main results}
\label{sec:prelim}

\subsection{Notations}
Let $n,d\in \mathbb{N}_{+}$. For any $x\in \R^n$ and $p\in  \mathbb{N}_{+}$,
let $ \|x\|_{\ell^p}$ be the $\ell^p$-norm of $x$. For convenience, we denote by $\|x\|$ the Euclidean norm ($\ell^2$-norm) of $x$. 
The inner (resp.\@ Hadamard) product of $x,y\in \R^n$ is denoted by $\langle x, y\rangle $ (resp.\@ $x\circ y$),
\begin{equation*}
   \langle x, y\rangle  = \sum_{i=1}^n x_iy_i,\quad  x\circ y = (x_1 y_1,\ldots, x_n y_n).
\end{equation*}
In the sequel of this article, unless otherwise specified, we fix the activation function $\sigma$ as the ReLU function, with $\bm{\sigma}$ standing for its $d$-dimensional vector-valued form:
\begin{equation*}
    \sigma(x) = \max\{x,0\}, \quad \forall x\in \R; \quad  \bm{\sigma} (\x) = (\sigma (\x_1),\ldots, \sigma(\x_d)), \quad \forall \x \in \R^d.
\end{equation*}
Let $\Omega \subseteq \mathbb{R}^n$ be a closed set. Denote by $\mathcal{H}^k(\Omega)$ the Sobolev space \cite[Def.\@ 3.2, $p=2$]{adams2003sobolev} (for any \( k \in \mathbb{N}_{+} \)) and by $\mathcal{C}(\Omega)$ the space of continuous functions on $\Omega$, each equipped with its standard norm. For any vector-valued functions $F \in \mathcal{H}^k(\Omega;\, \mathbb{R}^d)$ and $G \in \mathcal{C}(\Omega;\, \mathbb{R}^d)$, we define their norms as
\[
\|F\|_{\mathcal{H}^k(\Omega;\, \mathbb{R}^d)} := \sqrt{\sum_{i=1}^d  \|F_i\|^2_{\mathcal{H}^k(\Omega)}}, \quad
\|G\|_{\mathcal{C}(\Omega;\, \mathbb{R}^d)} := \sup_{x\in \Omega} \|G(x)\|,
\]
where $F_i$ denotes the $i$-th component of $F$. If no confusion arises, we shall simply write \(\|F\|_{\mathcal{H}^k(\Omega)}\)  for brevity.

\subsection{Semi-Autonomous Neural ODE}
Let us consider some ODE with a vector field from $\R^{d+1}$ ($d$ dimension for space and one dimension for time) to $\R^d$. We are interested in approximating this vector field by vector-valued shallow NNs (see Corollary \ref{cor:Barron_dd}).  
This leads to the following dynamical system, which we call the Semi-Autonomous Neural ODE,
\begin{equation}
    \label{eq:semi-nODE}
    \begin{dcases}
        \dot{\x} = \sum_{i=1}^P W_i  \circ 
 \Sigma(A^1_i \x + A^2_i t + B_i), \\
        \x(0) = x_0,
    \end{dcases}
\end{equation}
where $P\in \mathbb{N}_{+}$ is the width, and $W_i\in \R^d$, $A^1_i\in \R^{d\times d}$, $A^2_i\in \R^d $, $B_i\in \R^d$, for $i=1,\ldots, P$, are the parameters of the SA-NODE. As a consequence, the number of parameters (degree of freedom, DoF) of the SA-NODE is $Pd(d+3)$.

Let $\Theta = (W_i,A^1_i,A^2_i,B_i)_{i=1}^P$. For convenience, we denote by $f_{\Theta}(x,t)$ the right-hand side (r.h.s.) of \eqref{eq:semi-nODE}.  
It is straightforward to verify that $f_{\Theta}$ is globally Lipschitz continuous with respect to $x$:
\begin{equation}\label{eq:Lipschitz}
    \| f_{\Theta}(x, t) - f_{\Theta}(y, t) \| 
    \;\leq\; L_{\Theta}\,\|x - y\|, 
    \qquad \forall\, x, y \in \mathbb{R}^d,\; \forall\, t \ge 0,
\end{equation}
where the Lipschitz constant $L_{\Theta}$ is given by
\begin{equation}\label{eq:L_constant}
    L_{\Theta}
    = 
    \left( \sum_{j=1}^d 
        \left( \sum_{i=1}^P 
            |(W_i)_j|\, \| (A^1_i)_j \| 
        \right)^2 
    \right)^{1/2}.
\end{equation}
Here, $(W_i)_j$ denotes the $j$-th component of the weight vector $W_i$, and $(A^1_i)_j$ denotes the $j$-th row of the matrix $A^1_i$.

Therefore, we deduce from the Cauchy-Lipschitz Theorem that for any parameter $\Theta$ and any initial point $\x_0$, the system \eqref{eq:semi-nODE} has a unique solution for $t\geq 0$.

\subsection{Main results}
Fix $T>0$.
Let us consider a non-autonomous ODE system with a vector field $f\colon \R^d \times [0,T] \to \R^d$ and an initial point $\z_0\in \R^d$,
    \begin{equation}
    \label{eq:ode}
    \begin{cases}
    \dot \z = f(\z, t), ~ t\in (0,T),\\
    \z(0) = z_0.
    \end{cases}
    \end{equation}
    To ensure the existence and uniqueness of the solution of \eqref{eq:ode}, we need the following assumption.

    \begin{ass}\label{ass1}
        The function $f\colon \R^d \times [0,T] \to \R^d$ is continuous in $t$ and there exists $L>0$ such that
        \begin{equation*}
            \|f(x,t) - f(y,t)\| \leq L \|x-y\|, \quad \forall (x,y)\in \R^d  \text{ and } \forall t \in [0,T].
        \end{equation*}
    \end{ass}
Our first result concerns the approximation properties of SA-NODEs.
\begin{theorem}
\label{th:UAP}
    Let Assumption \ref{ass1} hold true. For any compact set $K\subseteq \R^{d}$ and any $\eps>0$, there exists a constant $ P_{\eps, T, K, f}$ such that for any $P\geq P_{\eps, T, K, f}$, there exist parameters $(W_i,A^1_i,A^2_i,B_i) \in \R^d \times \R^{d\times d} \times \R^d \times \R^d $, for $i=1,\ldots, P$, such that 
    \begin{equation*}
      \left\|  \z_{z_0}(\cdot) - \x_{z_0}(\cdot) \right\|_{\mathcal{C}([0,T];\,\R^d)} \leq \eps, \quad \forall z_0\in K,
    \end{equation*}
    where $\z_{z_0}(\cdot)$ (resp.\@ $\x_{z_0}(\cdot)$) is the solution of \eqref{eq:ode} (resp.\@ \eqref{eq:semi-nODE}) over the time horizon $[0,T]$ with the initial state $z_0$.
\end{theorem}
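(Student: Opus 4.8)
The plan is to reduce the problem to the universal approximation of the vector field $f$ by a shallow network, and then to control the propagation of that approximation error through the flow of the ODE via a Grönwall argument. First I would use the classical universal approximation theorem for shallow ReLU networks (the vector-valued version, Corollary \ref{cor:Barron_dd}) to produce, for a parameter $\delta>0$ to be chosen later, a width $P$ and parameters $\Theta=(W_i,A_i^1,A_i^2,B_i)_{i=1}^P$ such that $f_\Theta$ approximates $f$ uniformly on a suitable compact set. The subtlety is that universal approximation is only available on a \emph{compact} set, whereas a priori we do not know where the SA-NODE trajectories $\x_{z_0}(t)$ live. To handle this, I would first invoke Assumption \ref{ass1} (global Lipschitz continuity of $f$) together with Grönwall's inequality to obtain an a priori bound: since $K$ is compact and $f$ is Lipschitz, all true trajectories $\z_{z_0}(t)$ with $z_0\in K$ remain in a fixed ball $B_R$ for $t\in[0,T]$, with $R=R(T,K,f)$. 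I would then approximate $f$ on the slightly enlarged compact set $\overline{B_{R+1}}\times[0,T]$, so that as long as $\x_{z_0}(t)$ stays within distance $1$ of the true trajectory the network $f_\Theta$ is a good approximation of $f$ there.

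The core estimate is then standard. Writing $e(t) = \x_{z_0}(t) - \z_{z_0}(t)$ with $e(0)=0$, one has
\begin{equation*}
\dot e(t) = f_\Theta(\x_{z_0}(t),t) - f(\z_{z_0}(t),t) = \big(f_\Theta(\x_{z_0}(t),t) - f(\x_{z_0}(t),t)\big) + \big(f(\x_{z_0}(t),t) - f(\z_{z_0}(t),t)\big),
\end{equation*}
so that, as long as $\x_{z_0}(t)\in \overline{B_{R+1}}$,
\begin{equation*}
\|e(t)\| \leq \int_0^t \|f_\Theta - f\|_{\mathbb{L}^\infty(\overline{B_{R+1}}\times[0,T])}\, ds + L\int_0^t \|e(s)\|\, ds \leq \delta T + L\int_0^t \|e(s)\|\, ds,
\end{equation*}
and Grönwall gives $\|e(t)\| \leq \delta T e^{LT}$ for all $t\in[0,T]$. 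Choosing $\delta$ small enough that $\delta T e^{LT} \leq \min\{\eps,1\}$ makes the right-hand side at most $\eps$, and the bound being strictly below $1$ is exactly what is needed to run a continuity/bootstrap argument justifying that $\x_{z_0}(t)$ never leaves $\overline{B_{R+1}}$ on $[0,T]$: the set of times where $\|e(t)\|\le 1$ is closed, nonempty, and (by the estimate above) open in $[0,T]$, hence all of $[0,T]$. Uniformity in $z_0\in K$ is automatic since $R$, $L$, and the approximation quality of $f_\Theta$ do not depend on $z_0$.

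The main obstacle I anticipate is the interplay between the choice of the compact approximation domain and the a priori localization of the neural trajectories: one must set things up so that the continuation argument is not circular — i.e., the compact set on which $f$ is approximated must be fixed \emph{before} choosing $\delta$, and $\delta$ is then tuned so that the neural trajectory provably stays inside it. A secondary technical point is that the approximation result in Corollary \ref{cor:Barron_dd} may only guarantee approximation in an $\mathbb{L}^p$ or averaged sense rather than uniformly; if so, one either upgrades to $\mathbb{L}^\infty$ using additional regularity, or runs the Grönwall estimate in the integrated form $\int_0^t\|f_\Theta - f\|\,ds$ directly, which suffices since only the time integral of the defect enters. Finally, I would remark that the statement only claims existence of \emph{some} $P \ge P_{\eps,T,K,f}$ working — monotonicity in $P$ is not needed, since one can always pad the network with neurons carrying zero weights $W_i=0$.
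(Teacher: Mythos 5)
Your overall architecture matches the paper's proof almost exactly: obtain an a priori localization of trajectories via a Gr\"onwall/bootstrap argument (the paper's Lemma~\ref{lm:stable} plays the role of your $\overline{B_{R+1}}$ localization), invoke a uniform approximation of $f$ on a fixed compact set, decompose $\dot e$ into an approximation term plus a Lipschitz term, and close with Gr\"onwall. The continuity argument ensuring the neural trajectory remains in the compact set --- phrased by you as the open-closed-nonempty set of ``good times'', and by the paper as a bootstrap principle --- is the same idea, and your remark about padding with zero weights to handle ``for all $P\geq P_{\eps,T,K,f}$'' is correct and implicit in the paper.

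However, there is one genuine gap, which you flag as a ``secondary technical point'' but which is in fact the load-bearing issue. You propose to produce the uniform approximation of $f$ on the compact set from Corollary~\ref{cor:Barron_dd}. That corollary gives an $\mathbb{L}^2$ bound and applies to functions in the Barron space $\mathcal{S}^d_{\mathrm{B}}(X)$ --- which under Assumption~\ref{ass1} alone (continuous in $t$, Lipschitz in $x$) $f$ has no reason to belong to; the Barron hypothesis only enters via the extra Sobolev regularity of Assumption~\ref{ass2}, which Theorem~\ref{th:UAP} does not assume. Neither of your two suggested fallbacks repairs this. ``Upgrade to $\mathbb{L}^\infty$ using additional regularity'' is exactly the regularity the theorem withholds. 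And ``run Gr\"onwall in the integrated form'' does not work either: the integral $\int_0^t \|f(\x(s),s)-f_\Theta(\x(s),s)\|\,ds$ is a line integral along a one-dimensional trajectory in $\R^{d+1}$, while the $\mathbb{L}^2(X)$ bound from Corollary~\ref{cor:Barron_dd} controls an integral over the $(d+1)$-dimensional set $X$; there is no way to pass from the latter to the former pointwise in $z_0$ (the change-of-variables trick in the proof of Theorem~\ref{th:rates} only gives an \emph{averaged-in-$z_0$} estimate, which is why that theorem's conclusion is a mean bound, not an $\mathbb{L}^\infty$ one). What the proof actually requires is the classical $\mathbb{L}^\infty$ universal approximation theorem for merely continuous functions with non-polynomial activation --- Pinkus's theorem, stated in the paper as Theorem~\ref{th:pinkus}. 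Once you substitute that citation for Corollary~\ref{cor:Barron_dd}, the rest of your argument is correct and coincides with the paper's.
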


We emphasize that the optimal parameters in the theorem are independent of the choice of $z_0 \in K$, which justifies referring to the process as ``learning the dynamical system" rather than merely fitting a single trajectory.

\begin{remark}\label{rem:autonomous}
    When system \eqref{eq:ode} is autonomous, Theorem \ref{th:UAP} can be recast in the exact same shape for the simpler NODE 
    \[
    \begin{dcases}
        \dot{\x} = \sum_{i=1}^P W_i  \circ 
 \Sigma(A^1_i \x + B_i), \\
        \x(0) = x_0,
    \end{dcases}
    \]
    obtained by setting $A_i^2 = 0$. Throughout the paper we have made the conscious choice of being agnostic as to whether data have been collected by an autonomous or a non-autonomous system. We believe this better reflects the nature of real-world experiments, which are often polluted by small time-dependent errors. Nevertheless, if additional knowledge on the form of \eqref{eq:ode} is available, one can adopt an autonomous NODE.
\end{remark}

\color{black}
Our second result concerns an upper bound on the approximation rate by SA-NODEs with respect to the width $P$, as stated in Theorem \ref{th:rates}. Before that, let us make an additional assumption on the regularity of the vector field $f$. 
Let $X$ be any subset of $\R^d\times [0,T]$. The local Sobolev space $\mathcal{H}_{\textnormal{loc}}^{k}(\R^d\times [0,T])$ is the set of functions such that their restriction on $X$ belongs to \( \mathcal{H}^{k}(X)\) for any compact set $X\subseteq \R^d\times [0,T]$.

\begin{ass}\label{ass2}
There exists $k>(d+1)/2+2$ such that $ 
    f \in \mathcal{H}_{\textnormal{loc}}^{k}(\R^d\times [0,T];\, \R^d)$.
\end{ass}

\begin{theorem}
\label{th:rates}
    Let Assumptions \ref{ass1}-\ref{ass2} hold true. Fix any compact set $K\subseteq \R^d$. Then, for any \(P \ge 3\), there exist parameters  $(W_i,A^1_i,A^2_i,B_i) \in \R^d \times \R^{d\times d} \times \R^d \times \R^d $, for $i=1,\ldots, P$, such that
    \begin{equation}
    \label{meanbound}
       \left\|  \z_{z_0}(\cdot) - \x_{z_0}(\cdot) \right\|_{\mathcal{C}([0,T];\,\R^d)} \leq \frac{C_{T, K, f}}{\sqrt{P}}, \quad \forall z_0\in K, 
    \end{equation}
    where $C_{T, K, f}$ is a constant independent of $P$, and $\z_{z_0}(\cdot)$ (resp.\@ $\x_{z_0}(\cdot)$) is the solution of \eqref{eq:ode} (resp.\@ \eqref{eq:semi-nODE}) over the time horizon $[0,T]$ with the initial state $z_0$.
\end{theorem}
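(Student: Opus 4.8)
The plan is to decompose the problem into two parts: (i) a quantitative Barron-space approximation of the vector field $f$ by a shallow ReLU network $f_\Theta$ on a suitable compact enlargement of $K\times[0,T]$, and (ii) a Grönwall-type stability estimate transferring the vector-field error into a trajectory error, integrated over the initial data $z_0\in K$. First I would fix a compact set $\widehat K\subseteq\R^d$ large enough to contain all trajectories $\z_{z_0}(t)$ for $z_0\in K$, $t\in[0,T]$; this is possible since, by Assumption \ref{ass1} and Grönwall, $\|\z_{z_0}(t)\|\le (\|z_0\|+T\sup_{t}\|f(0,t)\|)e^{LT}$, which is uniformly bounded on $K$. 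I would then take $X=\overline{B_r}\times[0,T]$ with $r$ slightly larger than $\sup_{\widehat K}\|\cdot\|$ so that the SA-NODE trajectory, initially close to $\z_{z_0}$, also stays inside $X$ (a standard continuation/bootstrap argument closes this; see the obstacle paragraph). On $X$, Assumption \ref{ass2} gives $f\in\mathcal H^k(X;\R^d)$ with $k>(d+1)/2+2$, and the Sobolev embedding into the Barron space (the content of Corollary \ref{cor:Barron_dd}, invoking \cite{ma2022barron}) yields, for every $P$, a shallow network $f_\Theta(x,t)=\sum_{i=1}^P W_i\circ\Sigma(A_i^1x+A_i^2t+B_i)$ with
\[
\|f-f_\Theta\|_{\mathbb L^\infty(X;\R^d)}^2\ \lesssim\ \frac{\|f\|_{\mathcal B(X)}^2}{P}\ \le\ \frac{C_1}{P},
\]
where $C_1=C_1(T,K,f,d)$ and, crucially, one can also keep the Lipschitz constant of $f_\Theta$ (the quantity $\big\|\sum_i|W_i|\circ\|A_i^1\|_{\ell^2}\big\|$ appearing in \eqref{eq:Lipschitz}) bounded uniformly in $P$ by the Barron norm; I would record this as a second output of the approximation step.

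Next, for each fixed $z_0\in K$, set $e_{z_0}(t)=\z_{z_0}(t)-\x_{z_0}(t)$, with $e_{z_0}(0)=0$. Writing
\[
\dot e_{z_0}(t)=f(\z_{z_0}(t),t)-f_\Theta(\x_{z_0}(t),t)
= \big(f(\z_{z_0},t)-f(\x_{z_0},t)\big)+\big(f(\x_{z_0},t)-f_\Theta(\x_{z_0},t)\big),
\]
and using the Lipschitz bound $L$ for $f$ (Assumption \ref{ass1}) on the first bracket and the uniform bound $\|f-f_\Theta\|_{\mathbb L^\infty(X)}\le \sqrt{C_1/P}$ on the second (valid as long as $\x_{z_0}(t)\in X$), one gets $\frac{d}{dt}\|e_{z_0}(t)\|\le L\|e_{z_0}(t)\|+\sqrt{C_1/P}$, whence by Grönwall
\[
\sup_{t\in[0,T]}\|e_{z_0}(t)\|\ \le\ \frac{\sqrt{C_1/P}}{L}\big(e^{LT}-1\big)=:\sqrt{C_2/P},
\]
with $C_2=C_2(T,K,f)$ independent of $z_0$ and of $P$. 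Squaring and integrating over $z_0\in K$ gives
\[
\sup_{t\in[0,T]}\int_K\|\z_{z_0}(t)-\x_{z_0}(t)\|^2\,dz_0\ \le\ \frac{C_2\,|K|}{P}=:\frac{C_{T,K,f}}{P},
\]
which is the claimed bound. Note the same parameters $\Theta$ work for all $z_0\in K$ simultaneously, so there is no issue in pulling the supremum and the integral outside.

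The main obstacle is the circularity in the choice of the compact set $X$: the Barron estimate requires knowing a priori a compact set containing the SA-NODE trajectories $\x_{z_0}(t)$, but these trajectories are defined through $f_\Theta$, which is only constructed after $X$ is fixed. I would resolve this by the usual a priori/continuation argument: choose $X=\overline{B_{R+1}}\times[0,T]$ where $R$ bounds all true trajectories on $K\times[0,T]$; construct $f_\Theta$ on $X$ with the $O(1/\sqrt P)$ sup-error and uniformly-bounded Lipschitz constant; then, since $\x_{z_0}(0)=z_0\in K$ lies well inside $B_{R+1}$ and $\|\x_{z_0}(t)-\z_{z_0}(t)\|$ is controlled by Grönwall by $\sqrt{C_2/P}<1$ for $P$ large (and for small $P$ the bound \eqref{meanbound} is made trivially true by enlarging $C_{T,K,f}$, since trajectories on the finite horizon $[0,T]$ are bounded via \eqref{eq:Lipschitz}), the SA-NODE trajectory cannot exit $B_{R+1}$ before time $T$; a standard maximal-interval argument makes this rigorous. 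A secondary technical point is verifying that the Sobolev exponent $k>(d+1)/2+2$ is exactly what is needed to land in the Barron space on the $(d+1)$-dimensional domain $X$ with enough regularity to also control first derivatives (hence the Lipschitz constant of $f_\Theta$) — this is why the ``$+2$'' appears, and it is precisely the hypothesis of Corollary \ref{cor:Barron_dd}.
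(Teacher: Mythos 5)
Your approach breaks down at a crucial point: you invoke Corollary \ref{cor:Barron_dd} to obtain an $\mathbb{L}^\infty$ bound
$\|f-f_\Theta\|_{\mathbb L^\infty(X;\R^d)}^2 \lesssim \|f\|^2_{\mathcal{S}^d_{\mathrm{B}}(X)}/P$,
but that corollary (and the underlying result of Ma--Wu--E in Lemma \ref{lm:Barron_1d}) only gives the rate $O(1/P)$ in the $\mathbb{L}^2(X)$ norm, not in $\mathbb{L}^\infty$. The $\mathbb{L}^\infty$ rate for shallow-net approximation of Barron functions is a genuinely harder problem (cf.\ \cite{bach2017breaking}), and without an $\mathbb{L}^\infty$ control of order $O(1/\sqrt P)$ your Grönwall argument collapses: the inequality $\frac{d}{dt}\|e_{z_0}\|\le L\|e_{z_0}\|+\|f-f_\Theta\|_{\mathbb{L}^\infty}$ no longer produces a useful $P$-dependent bound. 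This gap is not incidental. It is precisely why Theorem \ref{th:rates} is stated as a \emph{mean-squared} bound over $z_0\in K$ rather than a pointwise one (see Remark following the theorem): if your $\mathbb{L}^\infty$ estimate were available, one would get a uniform-in-$z_0$ bound and the integral over $K$ would be gratuitous, i.e.\ you would have proved a strictly stronger statement than the theorem claims, which should have been a warning sign.

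The step your proposal is missing is the mechanism that lets one work with an $\mathbb{L}^2$ vector-field error. After deriving the analogue of your Grönwall inequality in integrated form, one must bound
$\int_K \|f(\z_{z_0}(s),s)-f_\Theta(\z_{z_0}(s),s)\|^2\,dz_0$,
which is the $\mathbb{L}^2$ error of $f-f_\Theta$ \emph{pulled back along the flow map} $\phi_s\colon z_0\mapsto\z_{z_0}(s)$. The paper converts this to $\int_{\phi_s(K)}\|f(x,s)-f_\Theta(x,s)\|^2\,dx$ via the change-of-variables formula for Lipschitz maps, using the lower bound
$|\det\nabla\phi_s(z_0)|\ge e^{-sLd}$
(a consequence of Liouville's formula together with the Lipschitz bound on $\mathrm{div}\,f$); this restores the $\mathbb{L}^2(X)$ quantity that Barron theory controls. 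That Jacobian-determinant argument is the core new idea and it is absent from your proposal. One further benefit of the paper's decomposition --- they split $\dot e_{z_0}$ as $\bigl(f(\z_{z_0})-f_\Theta(\z_{z_0})\bigr)+\bigl(f_\Theta(\z_{z_0})-f_\Theta(\x_{z_0})\bigr)$ rather than your $\bigl(f(\z_{z_0})-f(\x_{z_0})\bigr)+\bigl(f(\x_{z_0})-f_\Theta(\x_{z_0})\bigr)$ --- is that the approximation term is evaluated only at $\z_{z_0}(s)\in X$, so the circularity you flag (needing $\x_{z_0}(t)\in X$ before $f_\Theta$ exists) never arises; the second term is handled purely by the global Lipschitz bound on $f_\Theta$, which you correctly note is controlled uniformly in $P$ by the Barron norm (the content of \eqref{eq:lip}). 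Your bootstrap argument thus becomes unnecessary once the decomposition is done this way. Finally, the ``$+2$'' in Assumption \ref{ass2} is there to ensure $\Delta\bar f$ has an $\mathbb{L}^1$ Fourier transform so that $f$ lands in the spectral Barron space, not to control the Lipschitz constant of $f_\Theta$ as you speculate.
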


\begin{remark}
Theorems \ref{th:UAP} and \ref{th:rates} address different aspects of the approximation properties of SA-NODEs. The former provides only a qualitative result, while the latter quantifies the precision of the approximation in terms of the number of neurons $P$. The main concession that we have to make, aside from the additional regularity required, is that the bound \eqref{meanbound} we obtain holds for any initial data in $K$. 
\end{remark}

\begin{remark}[Comparison with Finite Element Approximation]\label{rem:FEM}
   Let us compare the approximation result in Theorem~\ref{th:rates} with that obtained by interpolating the vector field \( f \) using the \( P_1 \) finite element method (FEM). Suppose Assumption~\ref{ass2} holds. By the Sobolev embedding theorem, we have
   \[
   f \in \mathcal{W}^{2,\infty}_{\text{loc}}(\mathbb{R}^{d+1}).
   \]
  Therefore, for any compact domain \( \Omega \subset \mathbb{R}^{d+1} \) with Lipschitz boundary and a regular mesh \( \Omega_h \) of mesh size \( h \), it follows from \cite[Thm.~3.1.6]{ciarlet2002finite} that there exists an approximation \( f_h \) in the corresponding finite element space such that
\[
\|f - f_h\|_{\mathbb{L}^{\infty}(\Omega)} \leq C \|f\|_{\mathcal{W}^{2,\infty}(\Omega)} \, h^2,
\]
where \( C \) depends only on the domain \( \Omega \).

    Fixing the number of basis functions \(P\), the $P1$-FEM approximation of \(f\) over a regular mesh of size \(h \sim P^{-1/(d+1)}\) yields an error of order \(\|f - f_h\|_{\mathbb{L}^{\infty}(\Omega)} = \mathcal{O}(P^{-2/(d+1)})\). 
    This complexity deteriorates rapidly with the dimension \(d\), illustrating the classical \emph{curse of dimensionality}, which persists even for highly regular functions such as \(f \in \mathcal{C}_c^{\infty}(\mathbb{R}^{d+1})\).

    In contrast, Theorem~\ref{th:rates} shows that, under Assumption~\ref{ass2}, the SA-NODE approximation achieves an \(\mathbb{L}^{\infty}\)-error of order \(\mathcal{O}(P^{-1/2})\) with respect to the number of neurons \(P\). 
    Although the prefactor associated with \(P^{-1/2}\) grows exponentially with the dimension \(d\) (see Remark~\ref{rem:constant}), the convergence rate with respect to \(P\) itself remains dimension-independent.
    Therefore, \emph{from an asymptotic point of view}, for fixed \(d\geq 4\) and large \(P\), the neural network approximation decreases faster than the classical FEM rate \(\mathcal{O}(P^{-2/(d+1)})\). 
    This indicates an asymptotic advantage of neural network–based models in high-dimensional regimes, even though the curse of dimensionality remains present in the constants.

    Finally, we note that, unlike FEM, the training of neural network parameters involves solving a non-convex optimization problem. 
    Nevertheless, in practice, these parameters can be efficiently learned using stochastic gradient descent, as discussed in Section~\ref{sec:control}.
\end{remark}

\begin{remark}[Explicit formulation of constant]\label{rem:constant}
We give an explicit bound for the constant \(C_{T,K,f}\) from Theorem~\ref{th:rates} in the setting where \(f \in \mathcal{H}^{\,d/2 + 3}_{\mathrm{loc}}\) is uniformly \(L\)-Lipschitz in the spatial variable, for some \(L>0\). Define
\[
    \mathcal{F}_{L,d}
    := \Bigl\{\, f \in \mathcal{H}^{\,d/2 + 3}_{\mathrm{loc}}(\mathbb{R}^{d+1};\mathbb{R}^d)
    \;\Big|\; f(\cdot,t)\ \text{is }L\text{-Lipschitz in }x\ \text{for all }t \Bigr\}.
\]
For simplicity, fix the initial state domain \(K=[-1,1]^d\) and a horizon \(T\ge 1\). Then, for any \(f \in \mathcal{F}_{L,d}\), the reachable set of \eqref{eq:ode}, including the time variable, is contained in
\[
    \Omega_{L,T,d} := [-T e^{LT},\, T e^{LT}]^{d+1}.
\]
There exists a constant \(C_d>0\), depending only on the dimension \(d\), such that for every \(f\in\mathcal{F}_{L,d}\) the constant \(C_{T,K,f}\) in Theorem~\ref{th:rates} satisfies
\begin{equation}\label{eq:explicit-C}
    C_{T,K,f}
    \le
    C_d\,T\,
    \bigl\| f \bigr\|_{\mathcal{H}^{\,\frac{d}{2}+3}(\Omega_{L,T,d})}\,
    \exp\Bigl(\tfrac{5}{2}LT + \sqrt{d}\,L
      + C_d\, e^{\frac{3}{2}LT}\,
        \bigl\| f \bigr\|_{\mathcal{H}^{\,\frac{d}{2}+3}(\Omega_{L,T,d})}
    \Bigr).
\end{equation}
The proof is presented in Section \ref{sec:proof_main}.
We comment on the dependence in \eqref{eq:explicit-C}:
\medskip

\begin{itemize}
    \item (Dimension dependence). 
    The factor \(C_d\) stems from the Barron-type approximation constant for functions on hypercubes \([-1,1]^d\), which does not admit a simple closed form. 
    Overall, \(C_{T,K,f}\) depends exponentially on \(d\).  Hence, a curse-of-dimensionality effect appears in the numerator of the approximation rate \eqref{intro:eq:rate}. Nevertheless, as discussed in Remark~\ref{rem:FEM}, the network error scales like \(P^{-1/2}\), whereas the classical \(P1\)-FEM error scales like \(P^{-2/(d+1)}\). Therefore, for fixed \(d\ge 4\) and large \(P\), the network approximation is asymptotically superior.

\medskip
    \item (Time dependence).
    Fixing the vector field \(f \in \mathcal{F}_{L,d}\), we observe that the constant \(C_{T,K,f}\) grows super-exponentially in time.  
    This behavior arises because the reachable domain expands exponentially with \(T\), and the approximation error of \(f\) over this domain increases accordingly with its size.
    Applying Grönwall's inequality yields an overall double-exponential growth.  
    Sharper behavior is possible when the ODE is Lyapunov stable, yielding a uniformly bounded reachable set. In practice, this blow-up can be mitigated with model-predictive control (MPC) strategies \cite{veldman2024stability}; see Remark~\ref{rem:MPC}.

\medskip
    \item (Function norm dependence).
    For a fixed horizon \(T\), the constant \(C_{T,K,f}\) depends exponentially on the Sobolev norm of $f$ in the reachable domain. This arises because the Lipschitz constant of the learned (SA-NODE) vector field scales with this norm and thus enters the Grönwall exponent. Tighter constants may be obtained by using higher-order ReLU activations to better approximate derivatives of \(f\) (see \cite[Thm.~3]{siegel2023optimal}).
\end{itemize}
\end{remark}

Applying Theorem \ref{th:rates} to the transport equation \eqref{eq:trans} associated with \eqref{eq:ode}, we obtain the third main result (in Theorem \ref{th:Trans}) on the universal approximation rate of \eqref{eq:trans} by its neural counterpart \eqref{eq:neural_trans}. The transport equation reads
\begin{equation}
\label{eq:trans}
\begin{dcases}
    \partial_t \rho + {\rm{div}}_{x}(f(x,t)\,\rho) = 0, \quad (x, t)\in \R^d\times  [0,T],\\
    \rho(\cdot,0) = \rho_0 \in \mathcal{M}(\R^d),
\end{dcases}
\end{equation}
where the main variable $\rho\colon \R^d \times \R^+ \to \R$ and $\mathcal{M}(\R^d)$ is the signed measure space. Similarly, the transport equation associated with \eqref{eq:semi-nODE}, which is the so-called neural transport equation \cite{ruiz2024control}, reads
\begin{equation}
\label{eq:neural_trans}
\begin{dcases}
    \partial_t \rho + {\rm{div}}_{x}\left(\left(\sum_{i=1}^P W_i  \circ \Sigma(A^1_i x + A^2_i t + B_i)\right)\rho\right) = 0, \quad (x, t)\in \R^d\times  [0,T],\\
    \rho(\cdot,0) = \rho_0 \in \mathcal{M}(\R^d).
\end{dcases}
\end{equation}
The connection between NODEs and transport equations is not new, and it appears naturally in the theory of normalizing flows \cite{papamakarios2021normalizing, rezende2015variational}. In particular, the approximation of the terminal time distribution  of equation \eqref{eq:trans} by \eqref{eq:neural_trans} is examined in \cite{ruiz2024control}. In the following theorem, we extend this result and achieve a uniform approximation over the time horizon. Recall the definition of the Wasserstein-1 distance for probability measures as given in \cite[Def.\@ 6.1]{villani2009optimal}.

\begin{ass}\label{ass3}
The initial datum \(\rho_0\) is a compactly supported probability measure.
\end{ass}
\begin{theorem}\label{th:Trans}
    Let Assumptions \ref{ass1}-\ref{ass3} hold true. 
    Then, for any $P\ge 3$,
    there exist parameters $\Theta = \{(W_i,A^1_i,A^2_i,B_i)\}_{i=1}^P$ such that
    \begin{equation*}
       \sup_{t\in [0,T]} \mathbb{W}_1(\rho(\cdot, t), \rho_{\Theta} (\cdot,t) ) \leq \frac{C_{T, f,\rho_0}}{\sqrt{P}},
    \end{equation*}
    where $C_{T, f,\rho_0}$ is a constant independent of $P$, $\mathbb{W}_1(\cdot,\cdot)$ is the Wasserstein-1 distance,
    and $\rho(\cdot, t)$ (resp.\@ $\rho_{\Theta}(\cdot, t)$) is the solution of \eqref{eq:trans} (resp.\@ \eqref{eq:neural_trans}) at the time  $t\in [0,T]$.
\end{theorem}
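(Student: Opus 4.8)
The plan is to reduce Theorem \ref{th:Trans} to the quantitative ODE estimate of Theorem \ref{th:rates} through the method of characteristics. The crucial observation is that, since the vector field $f$ is globally Lipschitz in $x$ and continuous in $t$ (Assumption \ref{ass1}) and the neural vector field $f_\Theta$ enjoys the Lipschitz bound \eqref{eq:Lipschitz}, the associated flow maps $\Phi_t\colon z_0\mapsto \z_{z_0}(t)$ and $\Psi_t\colon z_0 \mapsto \x_{z_0}(t)$ are globally well-defined and Lipschitz on $[0,T]$. By the classical representation of solutions of continuity equations with locally Lipschitz velocity fields — the superposition/characteristics principle — the unique measure-valued solutions of \eqref{eq:trans} and \eqref{eq:neural_trans} are the pushforwards $\rho(\cdot,t)=(\Phi_t)_\#\rho_0$ and $\rho_\Theta(\cdot,t)=(\Psi_t)_\#\rho_0$. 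Since $\rho_0$ is compactly supported (Assumption \ref{ass3}) and $\Phi_t,\Psi_t$ map compact sets into compact sets uniformly for $t\in[0,T]$, both measures remain compactly supported, so all the Wasserstein distances below are finite.

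With this in hand, the estimate follows from two elementary steps. First, for any probability measure $\mu$ and Borel maps $\Phi,\Psi$, the image measure $(\Phi,\Psi)_\#\mu$ is an admissible transport plan between $\Phi_\#\mu$ and $\Psi_\#\mu$, hence $\mathbb{W}_1(\Phi_\#\mu,\Psi_\#\mu)\le \int \|\Phi(z)-\Psi(z)\|\,d\mu(z)$. Applying this with $\mu=\rho_0$, $\Phi=\Phi_t$, $\Psi=\Psi_t$, and then Cauchy–Schwarz on $\mathbb{L}^2(K)$ using $\rho_0\in\mathbb{L}^2(K)$ (Assumption \ref{ass3}), I obtain
\[
\mathbb{W}_1(\rho(\cdot,t),\rho_\Theta(\cdot,t)) \le \int_K \|\z_{z_0}(t)-\x_{z_0}(t)\|\,\rho_0(z_0)\,dz_0 \le \|\rho_0\|_{\mathbb{L}^2(K)}\left(\int_K \|\z_{z_0}(t)-\x_{z_0}(t)\|^2\,dz_0\right)^{1/2}.
\]
Finally, I would apply Theorem \ref{th:rates} with the compact set $K=\mathrm{supp}\,\rho_0$: for every $P\ge 1$ there is a single choice of parameters $\Theta$, valid simultaneously for all $t\in[0,T]$, such that the integral on the right is at most $C_{T,K,f}/P$. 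Taking the supremum over $t\in[0,T]$ then yields the claimed bound with $C_{T,f,\rho_0}=\|\rho_0\|_{\mathbb{L}^2(K)}\sqrt{C_{T,K,f}}$.

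The only genuinely delicate point — and the step I would spell out most carefully — is the identification of the distributional solution of the neural transport equation \eqref{eq:neural_trans} with the pushforward of $\rho_0$ along the characteristic flow of the SA-NODE. This is a standard fact for continuity equations whose drift is locally Lipschitz with at most linear growth (which is the case for $f_\Theta$ by \eqref{eq:Lipschitz}), but it requires pinning down the precise notion of weak solution used for \eqref{eq:trans}–\eqref{eq:neural_trans} and invoking the corresponding well-posedness and representation theorem; everything after that is a short computation. A minor additional point is to check that the constant $C_{T,K,f}$ from Theorem \ref{th:rates} depends on $\rho_0$ only through its support and its $\mathbb{L}^2$ norm, so that the final constant is legitimately of the form $C_{T,f,\rho_0}$.
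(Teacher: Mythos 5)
Your proof is correct and follows essentially the same route as the paper: pushforward representation of the transport solutions via the characteristic flows, the elementary Wasserstein-1 bound by the flow-map difference, Cauchy--Schwarz with $\rho_0\in\mathbb{L}^2(K)$, and Theorem \ref{th:rates}. The only cosmetic difference is that you bound $\mathbb{W}_1$ via the coupling $(\Phi_t,\Psi_t)_\#\rho_0$ while the paper invokes the Kantorovich--Rubinstein dual formula; these are equivalent and give the identical intermediate estimate, and the paper justifies the pushforward representation you flag as the delicate step by citing Ambrosio et al.
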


\begin{remark}[Sharper Sobolev index]\label{rem:Sobolev}
The Sobolev regularity index \((d+1)/2 + 2\) appearing in Assumption~\ref{ass2} arises from the continuous embedding of Sobolev spaces into Barron spaces, as established in Lemma~\ref{lm:Sobolev_Barron}.  
We note that a sharper version of this embedding result was proved in \cite[Thm.~1]{mao2024approximation} using techniques based on the Radon transform. By applying this refined result, the regularity requirement in Assumption~\ref{ass2} can be improved to \(k\geq (d+1)/2 + 3/2\). The conclusions of Theorems~\ref{th:rates} and~\ref{th:Trans} remain unchanged under this improvement.
\end{remark}

\begin{remark}[MPC perspective]\label{rem:MPC}
As mentioned in Remark~\ref{rem:constant}, the error exhibits a rapid theoretical blow-up over time. Hence, even if the neural network is chosen with a very large width \(P\), resulting in a small initial approximation error, this error still grows super-exponentially with \(T\). Similar growth rate holds for the transport equation case. A possible practical way to mitigate this exponential growth is to adopt a MPC perspective. Instead of training a single SA-NODE to approximate the entire time horizon \([0, T]\), we update or fine-tune the network parameters over successive, shorter time windows of length \(\tau\). This strategy serves as a compromise between SA-NODEs (\(\tau = T\)) and vanilla NODEs (\(\tau  \to 0\)). Investigating the optimal choice of the time step \(\tau \) remains an important direction for future work.
\end{remark}

\begin{remark}[Approximation in the $\mathbb{L}^p$-norm]\label{rem:Lp}
   Theorem \ref{th:Trans} provides an approximation error in the Wasserstein sense. When the initial distribution has an $\mathbb{L}^p$-density, the solution $\rho$ lies in $\mathcal{C}([0,T];\, \mathbb{L}^p(\mathbb{R}^d))$. Moreover, the approximation error can also be estimated in the $\mathbb{L}^p$ sense using the classical energy method when the vector field $f$ is approximated by a neural network in the $\mathcal{W}^{1,\infty}$-norm, i.e., the approximation controls both the function and its gradient \cite{hornik1991approximation}. In this case, the stronger approximation result in \cite[Thm.~3]{siegel2023optimal} is applicable.
\end{remark}

\section{Proof of main results}
\label{sec:proofs}

\noindent
This section is devoted to proving the main results. 

\subsection{Proof of Theorem \ref{th:UAP}}

The proof is based on the following universal approximation result due to Pinkus \cite{pinkus}, which extends the celebrated theorem of Cybenko \cite{CYB} to non-polynomial activation functions. We report it here for the reader's convenience, suitably tailored to our scopes.

\begin{theorem}[\cite{pinkus}]
\label{th:pinkus}
    Fix a compact set $X\subseteq \R^{d+1}$. Let $\sigma$ be a non-polynomial continuous function. For any function $g \in \mathcal{C}(X;\R^d)$ and $\eps > 0$ there exists parameters $(W_i,A_i,B_i) \in \R^d \times \R^{(d+1)\times d} \times \R^d$, for $i=1,\ldots, P$, such that, calling
    \[
    f_{\Theta}(x) = \sum_{i=1}^P W_i  \circ 
 \Sigma(A_i x  + B_i),\quad \forall x\in X,
    \]
    it holds
    \[
    \| g - f_{\Theta}\|_{\mathcal{C}(X;\,\R^d)}\leq \eps.
    \]
\end{theorem}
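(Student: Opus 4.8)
The plan is to prove Theorem~\ref{th:pinkus} by three successive reductions --- vector-valued to scalar-valued networks, multivariate scalar to univariate via ridge functions, and finally the univariate density statement by mollification and iterated difference quotients --- following the classical argument of Leshno--Lin--Pinkus--Schocken recorded in \cite{pinkus}; I only describe the structure. \textbf{Step 1 (vectorization).} Since $\Sigma$ acts componentwise and the product is Hadamard, the $j$-th coordinate of $f_\Theta$ is $\bigl(f_\Theta(x)\bigr)_j=\sum_{i=1}^P (W_i)_j\,\sigma\bigl(\langle (A_i)_{(j)},x\rangle+(B_i)_j\bigr)$, an ordinary scalar shallow network whose inner weights $(A_i)_{(j)}\in\R^{d+1}$ (the $j$-th row of $A_i$) may be chosen independently of the other coordinates. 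Hence it suffices to approximate each continuous component $g_j\colon X\to\R$ within $\eps$ by some scalar network $\sum_{i=1}^{P_j}w_i^j\sigma(\langle a_i^j,x\rangle+b_i^j)$, then set $P=\max_j P_j$, pad the shorter sums with zero weights, and assemble $W_i,A_i,B_i$ coordinate by coordinate (with a harmless rescaling of $\eps$ by $\sqrt d$). We are thus reduced to proving that $\mathcal M_\sigma:=\mathrm{span}\{\,x\mapsto\sigma(\langle a,x\rangle+b):a\in\R^{d+1},\ b\in\R\,\}$ is dense in $\mathcal C(X)$ for every compact $X\subseteq\R^{d+1}$ and every non-polynomial $\sigma\in\mathcal C(\R)$.

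\textbf{Step 2 (reduction to one variable).} The ridge-function space $\mathcal R:=\mathrm{span}\{\,x\mapsto h(\langle a,x\rangle):a\in\R^{d+1},\ h\in\mathcal C(\R)\,\}$ is dense in $\mathcal C(X)$: every homogeneous polynomial of degree $k$ in $d+1$ variables is a finite linear combination of powers $\langle a,x\rangle^k$ (the $k$-th powers of linear forms span the space of such polynomials), so $\mathcal R$ contains every polynomial and hence, by Stone--Weierstrass, is dense. It therefore suffices to approximate a single ridge function $x\mapsto h(\langle a,x\rangle)$ uniformly on $X$; since $\langle a,\cdot\rangle$ sends $X$ into a compact interval $I\subseteq\R$, and a uniform approximation $h\approx\sum_i w_i\sigma(\lambda_i t+\theta_i)$ on $I$ gives $h(\langle a,x\rangle)\approx\sum_i w_i\sigma(\langle \lambda_i a,x\rangle+\theta_i)\in\mathcal M_\sigma$ uniformly on $X$, everything reduces to the univariate claim: for non-polynomial $\sigma\in\mathcal C(\R)$ one has $\overline{\mathrm{span}}\{\,t\mapsto\sigma(\lambda t+\theta):\lambda,\theta\in\R\,\}=\mathcal C(I)$ for every compact interval $I$.

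\textbf{Step 3 (the univariate case).} If $\sigma\in\mathcal C^\infty$, the difference quotient $h^{-1}\bigl[\sigma((\lambda+h)t+\theta)-\sigma(\lambda t+\theta)\bigr]$ lies in $\mathrm{span}\{\sigma(\mu t+\nu)\}$ and tends, uniformly for $t\in I$, to $t\,\sigma'(\lambda t+\theta)$ as $h\to0$; iterating shows $t^k\sigma^{(k)}(\lambda t+\theta)$ is in the closed span for every $k$ and all $\lambda,\theta$, and setting $\lambda=0$ yields $t^k\sigma^{(k)}(\theta)$. As $\sigma$ is not a polynomial, for each $k$ some $\theta_k$ has $\sigma^{(k)}(\theta_k)\neq0$, so $t^k$ --- hence every polynomial, hence by Weierstrass all of $\mathcal C(I)$ --- lies in the closed span. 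For merely continuous $\sigma$ one mollifies: for $\varphi\in\mathcal C_c^\infty(\R)$ the function $\sigma_\varphi:=\sigma*\varphi$ is smooth, and $\sigma_\varphi(\lambda t+\theta)=\int\sigma(\lambda t+\theta-y)\varphi(y)\,dy$ is a uniform-on-$I$ limit of Riemann sums, each a finite linear combination of translates and dilates of $\sigma$, so $\overline{\mathrm{span}}\{\sigma_\varphi(\lambda t+\theta)\}\subseteq\overline{\mathrm{span}}\{\sigma(\lambda t+\theta)\}$; it then remains to exhibit $\varphi$ with $\sigma_\varphi$ not a polynomial and invoke the smooth case.

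\textbf{Main obstacle.} The delicate point is exactly the last one: ruling out that every mollification of a continuous $\sigma$ is a polynomial. The subtlety is that the degrees of $\sigma*\varphi$ could a priori be unbounded. One circumvents this by fixing $\varphi_0$ with $\int\varphi_0=1$, setting $m_0:=\deg(\sigma*\varphi_0)$, and using the associativity identity $(\sigma*\psi)*\varphi_0=(\sigma*\varphi_0)*\psi$ together with the two elementary facts that convolution with a compactly supported function never raises the degree of a polynomial, while convolution with $\varphi_0$ leaves the leading term untouched; this forces $\deg(\sigma*\psi)\le m_0$ for all $\psi\in\mathcal C_c^\infty$, and running $\psi$ through an approximate identity --- using that the polynomials of degree $\le m_0$ form a closed subspace of $\mathcal C(I)$ --- would make $\sigma$ itself a polynomial, a contradiction. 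The remaining ingredients (the coordinatewise bookkeeping of Step 1, the polarization identity and Stone--Weierstrass of Step 2, and the difference quotients and Riemann sums of Step 3) are routine, and since every approximation involves only finitely many terms, the extracted network has finite width $P$, as claimed.
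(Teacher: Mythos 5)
Your outline is correct, but note that the paper does not prove this statement at all: it is quoted verbatim (up to vectorization and notation) from Pinkus's survey \cite{pinkus}, where it is the Leshno--Lin--Pinkus--Schocken density theorem, so the only ``proof'' in the paper is the citation. What you have reconstructed is essentially the classical argument behind that citation: coordinatewise reduction to scalar shallow networks, density of ridge functions via the fact that $k$-th powers of linear forms span the homogeneous polynomials of degree $k$ plus Stone--Weierstrass, and the univariate case by difference quotients in the dilation parameter for smooth $\sigma$ (giving $t^k\sigma^{(k)}(\theta)$ in the closed span, hence all polynomials) together with mollification for merely continuous $\sigma$. The one place where you genuinely deviate from the standard treatment is the ``every mollification is a polynomial'' contradiction: Leshno et al.\ obtain the uniform degree bound by a Baire category argument on the closed sets $\{\varphi:\deg(\sigma*\varphi)\le m\}$, whereas you get it directly from the associativity identity $(\sigma*\psi)*\varphi_0=(\sigma*\varphi_0)*\psi$ with a fixed $\varphi_0$ of unit integral, using that convolution with an integrable compactly supported kernel cannot raise the degree of a polynomial while convolution with $\varphi_0$ preserves the leading coefficient; this is a clean, more elementary route to the bound $\deg(\sigma*\psi)\le\deg(\sigma*\varphi_0)$, after which the approximate-identity step (finite-dimensionality, hence closedness, of the polynomials of degree $\le m_0$ in $\mathcal{C}(I)$) correctly forces $\sigma$ to be a polynomial, the desired contradiction. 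Minor bookkeeping aside (the paper's $\R^{(d+1)\times d}$ should be read as $d\times(d+1)$, consistent with your row convention, and the $\sqrt d$ rescaling of $\eps$ in the vectorization step), your argument is sound and self-contained, which is more than the paper attempts; its cost is length, which is precisely why the paper outsources the result to \cite{pinkus}.
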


We also need the following lemma on the a priori bound of the solution of SA-NODE \eqref{eq:semi-nODE}.
\begin{lemma}[A priori bound]\label{lm:stable}
  Let Assumption \ref{ass1} hold true. For any $t\in [0,T]$, define 
\[
K_t \coloneqq \left\{x\in \R^d \, \Big|\, \|x\| \leq \sup_{z\in K} \left(\|z\| + t+ \int_{0}^t \|f(0,s)\|ds\right) \exp{(Lt)}\right\}.
\]
 Then, for any \(f_1 \in \mathcal{C}(\R^d\times[0,T];\, \R^d)\) such that $f_1$ is locally Lipschitz in $x$ and \(\|f_1 - f\|_{\mathbb{L}^{\infty}(K_T\times [0,T]; \,\R^d)} \leq 1\) and \(\y\) satisfying
\begin{equation*}
    \dot{\y} = f_1 (\y,t), \quad \y(0) = z_0 \in K,
\end{equation*}
we have \(\y(t) \in K_t\) for any \(t \in [0,T]\).
\end{lemma}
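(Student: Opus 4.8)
The plan is to combine Grönwall's inequality with a continuation (bootstrap) argument. Write $R_t := \sup_{z\in K}\bigl(\|z\|+t+\int_0^t\|f(0,s)\|\,ds\bigr)e^{Lt}$, so that $K_t=\{x\in\R^d:\|x\|\le R_t\}$; note that $t\mapsto R_t$ is continuous and strictly increasing (the $+t$ term forces strict monotonicity, and the supremum of a compact family of strictly increasing continuous functions retains both properties), and that $z_0\in K_0$ because $\|z_0\|\le\sup_{z\in K}\|z\|=R_0$. The a priori bound we are about to prove will also guarantee that $\y$ stays bounded, hence is genuinely defined on all of $[0,T]$ (recall $f_1$ is continuous and locally Lipschitz in $x$, so solutions exist locally and extend as long as they remain bounded).

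The first ingredient is an elementary differential inequality. Whenever $\y(s)\in K_T$ for every $s$ in some interval $[0,t]$, the hypothesis $\|f_1-f\|_{\mathbb{L}^\infty(K_T\times[0,T];\R^d)}\le 1$ together with Assumption \ref{ass1} gives, for a.e.\ $s\in[0,t]$,
\[
\|\dot\y(s)\|=\|f_1(\y(s),s)\|\le\|f(\y(s),s)\|+1\le\|f(0,s)\|+1+L\|\y(s)\|.
\]
Integrating from $0$ and invoking Grönwall's inequality in integral form (the forcing term $\|z_0\|+\int_0^s(\|f(0,r)\|+1)\,dr$ being non-decreasing in $s$) yields
\[
\|\y(t)\|\le\Bigl(\|z_0\|+t+\int_0^t\|f(0,s)\|\,ds\Bigr)e^{Lt}\le R_t .
\]

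The second ingredient, which is really the only delicate point, is the continuation scheme: since $\|f_1-f\|\le 1$ is assumed \emph{only} on $K_T\times[0,T]$, the inequality above cannot be applied until one knows that $\y$ stays inside $K_T$. I would set
\[
\tau:=\sup\bigl\{t\in[0,T]:\ \y(s)\in K_s\ \text{for all}\ s\in[0,t]\bigr\},
\]
which is well defined since $\y(0)=z_0\in K_0$. For $t<\tau$ one has $\y(s)\in K_s\subseteq K_T$ on $[0,t]$, so the differential inequality applies and gives $\|\y(t)\|\le R_t$; letting $t\to\tau^-$ and using continuity of $\y$ and of $R_\cdot$ shows $\|\y(\tau)\|\le R_\tau$, i.e.\ $\y(s)\in K_s$ on all of $[0,\tau]$. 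If $\tau<T$, then $\|\y(\tau)\|\le R_\tau<R_T$, so $\y(\tau)$ lies in the open ball of radius $R_T$ and, by continuity of $\y$, there is $\delta>0$ with $\y(t)\in K_T$ for all $t\in[0,\tau+\delta]$. Then the differential inequality applies on each such $[0,t]$ and produces $\|\y(t)\|\le R_t$, i.e.\ $\y(t)\in K_t$, contradicting the maximality of $\tau$. Hence $\tau=T$, which is exactly the assertion that $\y(t)\in K_t$ for every $t\in[0,T]$.

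The main obstacle, as indicated, is purely the interplay between the estimate and the domain on which the perturbation bound holds: one must propagate membership in $K_T$ via the continuation argument rather than assume it, and then the sharper conclusion $\y(t)\in K_t$ falls out of the same Grönwall estimate restricted to $[0,t]$. The remaining points — continuity and strict monotonicity of $t\mapsto R_t$, the limiting argument at $t=\tau$, and the use of absolute continuity of $\y$ to legitimize the integral Grönwall estimate — are routine.
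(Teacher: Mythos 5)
Your proof is correct and follows essentially the same strategy as the paper: a Gr\"onwall estimate for the perturbed flow combined with a bootstrap/continuation argument to justify that the perturbation bound $\|f_1-f\|_{\mathbb{L}^\infty(K_T\times[0,T])}\le 1$ can indeed be invoked along the trajectory. The paper simply phrases the continuation step abstractly by appealing to the bootstrap principle of Tao (Prop.\ 1.21 in \cite{tao2006nonlinear}), whereas you unfold it into an explicit $\tau=\sup\{\cdots\}$ argument, exploiting the strict monotonicity of $R_t$; these are the same idea in different presentations.
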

\begin{proof}
The proof follows from the standard bootstrap principle \cite[Prop.\@ 1.21]{tao2006nonlinear}.
For any $t\in [0,T]$, denote by $\mathbf{H}(t)$ the ``hypothesis”: $\|f_1(\y(s),s) - f(\y(s),s)\| \leq 1$ for any $s\in[0,t]$; and denote by $\mathbf{C}(t)$ the ``conclusion": $\y(s)\in K_s$ for any $s\in[0,t]$. First, $\mathbf{H}(0)$ is true. Then, by Gr\"{o}nwall's inequality, $\mathbf{H}(t)$ implies $\mathbf{C}(t)$. Moreover, by the assumption of $f_1$ and the definition of $K_t$, $\mathbf{C}(t)$ implies $\mathbf{H}(t')$ for $t'\in [0,T]$ in a neighborhood of $t$. Since $K_t$ is compact and continuously depends on $t$, the conclusion $\mathbf{C}(t)$ is closed. We conclude from \cite[Prop.\@ 1.21]{tao2006nonlinear}.
\end{proof}

We now prove Theorem \ref{th:UAP}. Fixing any $0< \eps <1$, we apply Theorem \ref{th:pinkus} to $f$ on $K_T$ (defined in Lemma \ref{lm:stable}), finding $P$, $W_i \in \R^{d}$, $A_i=(A_i^1, A_i^2) \in \R^{(d+1)\times d}$ and $B_i\in \R^d$, such that the function
    \[
    f_{\Theta}(x,t) = \sum_{i=1}^P W_i  \circ 
 \Sigma(A^1_i x + A^2_i t + B_i), 
    \]
approximates $f$ by $\eps$ in the $\mathbb{L}^\infty(K_T\times [0,T];\, \R^d)$ norm. Since $\epsilon <1$, by Lemma \ref{lm:stable}, we have $\x_{z_0}(t)\in K_T$ for any $t\in[0,T]$.
 Hence, recalling that $f$ is uniformly Lipschitz continuous, we have
\begin{align*}
   \|\z_{z_0}(t) - \x_{z_0}(t)\| &= \left\| z_0 + \int_0^t f(\z_{z_0}(s),s)ds - z_0 - \int_0^t f_{\Theta}(\x_{z_0}(s),s)ds\right\| \\
   &\leq \int_0^t \left\|f(\z_{z_0}(s),s) - f(\x_{z_0}(s),s) + f(\x_{z_0}(s),s) - f_{\Theta}(\x_{z_0}(s),s)\right\|ds \\
   &\leq L\int_0^t \|\z_{z_0}(s) - \x_{z_0}(s)\|ds + \eps t,
\end{align*}
for any $t\leq T$. 
Exploiting again Gr\"{o}nwall's Lemma we arrive at
\begin{equation*}
        \label{gron}
    \|\z_{z_0} - \x_{z_0}\|_{\mathbb{L}^{\infty}([0,T];\R^d)} \leq \eps T e^{L T}.
    \end{equation*}
Up to redefining $\eps$, we obtain the conclusion.

\subsection{Approximation rate in the Barron space}

Fix any compact set $X\in \R^n$ with $n\in \mathbb{N}_{+}$. Recall the definition of the Barron space on $X$ from \cite[Eq.\@ 1]{ma2022barron}:
\begin{equation}
\label{defn:Barron_space}
\begin{aligned}
    \mathcal{S}_{\text{B}}(X) \coloneqq \Big\{ f\in \mathcal{C}(X) \,\Big| \, &\exists \, \mu\in \mathcal{P}(\R^{n+2}) \\ 
    &\text{ s.t. } f(x) = \int_{\R^{n+2}} w \sigma (\langle a, x\rangle +b) d\mu(w,a,b), \, \forall x\in X \Big\},
\end{aligned}
\end{equation}
where $\mathcal{P}(\R^{n+2})$ is the set of all Borel probability measures on $\R^{n+2}$. 

Let us recall the following result, which characterizes a class of functions lying in the Barron space and establishes a uniform approximation rate by shallow NNs. This lemma is a slight refinement of \cite[Thm.~2]{klusowski2018approximation}.
\begin{lemma}\label{lm:Barron}
Let \(X = [-1,1]^n\).  Suppose \(f \in \mathcal{C}(X)\) admits an extension \(\bar f\in \mathbb{L}^1(\R^n)\) whose Fourier transform satisfies
\begin{equation}\label{eq:barron_norm}
    v_{f,2}
\;:=\;
\int_{\R^n}\|\xi\|_{\ell^1}^2\;\bigl|\mathcal F(\bar f)(\xi)\bigr|\;d\xi
\;<\;\infty.
\end{equation}
Then \(f\in \mathcal S_{\mathrm B}(X)\).  Moreover, for every integer \(P\ge3\) there exist \((w_i,a_i,b_i)\in\R^{n+2}\), for \(i=1,\dots,P\), such that
\begin{align*}
   & \Bigl\|\,f 
  \;-\;\sum_{i=1}^{P} w_i\,\sigma\bigl(\langle a_i,\,\cdot\,\rangle + b_i\bigr)
\Bigr\|_{\mathcal{C}(X)}
\;\le\;
\frac{C_n\,v_{f,2}}{\sqrt{P}}, \quad \textnormal{and}
\\
& \textnormal{Lip}\, \left( \sum_{i=1}^{P} w_i\,\sigma\bigl(\langle a_i,\,\cdot\,\rangle + b_i\bigr)\right) \leq \|\nabla f(0)\| + 2\, v_{f,2},
\end{align*}
where \(C_n>0\) depends only on the dimension \(n\). 
\end{lemma}

\begin{proof}
For any \(P\ge1\), \cite[Thm.~2]{klusowski2018approximation} provides parameters
\[
w_i\in\bigl[-2v_{f,2}/P,\;2v_{f,2}/P\bigr],\quad
\|a_i\|_1=1,\quad
b_i\in[-1,1],
\]
such that
\[
\Bigl\|\,f(x)
- \Bigl(f(0) + \langle\nabla f(0),x\rangle
  + \sum_{i=1}^P w_i\,\sigma(\langle a_i,x\rangle + b_i)\Bigr)\Bigr\|_{\mathcal{C}(X)}
\;\le\;
\frac{C_n}{\sqrt{P}},
\]
with \(C_n>0\) depending only on \(n\).  Noting that the affine term can be represented by two ReLU neurons,
\[
f(0)+\langle\nabla f(0),x\rangle
=\sigma\bigl(\langle\nabla f(0),x\rangle + f(0)\bigr)
- \sigma\bigl(\langle-\nabla f(0),x\rangle - f(0)\bigr),
\]
one obtains the claimed error for \(P\ge3\).  Finally, since \(\|a_i\|_2\le\|a_i\|_1=1\), the network’s Lipschitz constant is bounded by
\[
\|\nabla f(0)\| \;+\; \sum_{i=1}^P |w_i|\,\|a_i\|_2
\;\le\;
\|\nabla f(0)\| + 2\,v_{f,2}.
\]
The conclusion follows.
\end{proof}

The space of functions satisfying \eqref{eq:barron_norm} is  referred to as the Fourier-Lebesgue space in the literature. In the following lemma, we show that the Sobolev space $\mathcal{H}^k(X)$ (when the smoothness parameter \(k\) is sufficiently large) is continuously embedded in the Fourier-Lebesgue space, and therefore lies in \(\mathcal{S}_{\mathrm{B}}(X)\).
A sharper version of this result was established in \cite{mao2024approximation} using the Radon transform, see Remark~\ref{rem:Sobolev}.

\begin{lemma}\label{lm:Sobolev_Barron}
 Let $X=[-1,1]^n$. For any function $f\in \mathcal{H}^{k}(X)$ with $k > n/2 + 2$, we have
 \begin{equation*}
     v_{f,2} \leq  C_{n,k} \, \|f\|_{\mathcal{H}^k(X)},
 \end{equation*}
 where $v_{f,2}$ is defined in \eqref{eq:barron_norm} and $C_{n,k}>0$ depends only on \((n,k)\).
\end{lemma}

\begin{proof}
    Since $X=[-1,1]^n$ satisfies the strong local Lipschitz condition (see \cite[Def.\ 4.9]{adams2003sobolev}) and $f\in \mathcal{H}^{k}(X)$, where $k>n/2+2$, by \cite[Thm.\@ 4.12]{adams2003sobolev}, we have $f\in \mathcal{C}^{2}(X)$.
    Moreover, by \cite[Thm.\@ 5.24]{adams2003sobolev}, there exists an extension $\bar{f} \in \mathcal{H}^{k}(\R^{n})$ such that $\bar{f} |_{X} = f$. 
    Let $\mathcal{F}(\bar{f})$ denote the Fourier transform of \(\bar f\).  By the Cauchy–Schwarz inequality, 
\begin{equation*}
\begin{split}
    \int_{\R^n} \|\xi\|^2 | \mathcal{F}(\bar{f}) (\xi)| d \xi & \leq \int_{\R^n} \left(1 + \|\xi\|^2\right) | \mathcal{F}(\bar{f}) (\xi)| d \xi \\
    & \leq  \left( \int_{\R^n} \left(1 + \|\xi\|^2\right)^{2-k}  d \xi \right)^{\frac{1}{2}}  \left( \int_{\R^n} \left(1 + \|\xi\|^2\right)^k | \mathcal{F}(\bar{f}) (\xi)|^2 d \xi \right)^{\frac{1}{2}}\\
    & = \pi ^{n/4}\, \frac{\Gamma(k-2-n/2)}{\Gamma(k-2)} \, \|\bar{f}\|_{\mathcal{H}^k(\R^n)} ,
\end{split}
   \end{equation*}
where $\Gamma(\cdot)$ is the Gamma function. Since the extension operator \(E: \mathcal{H}^k(X)\to \mathcal{H}^k(\R^n)\) is bounded with norm depending only on \(n\) and \(k\), and using the inequality $\|\xi\|_{\ell^1} \leq \sqrt{n} \|\xi\|  $ for any $\xi \in \R^n$, the desired estimate follows.
\end{proof}

Recall that $\circ$ denotes the Hadamard product and that 
\(\bm\sigma\colon \R^d\to\R^d\) is the component‐wise ReLU activation. Combining the previous two lemmas yields the following corollary.

\begin{corollary}
    \label{cor:Barron_dd}
     Fix any \(m\in\mathbb{N}\) and set \(X_m=[-m,m]^n\). Let
      $F \in \mathcal{H}^{k}(X_m;\R^d)$ with $k > n/2 + 2$. 
    Then, for any $P \geq 3$, there exists $(W_i, A_i, B_i) \in \R^d \times \R^{d\times n} \times \R^{d}$, for $i=1,\ldots,P$, such that 
    \begin{align*}
       & \left\| F(\cdot) - \sum_{i=1}^P W_i \circ \bm{\sigma} (A_i \cdot + B_i) \right\|_{\mathcal{C}(X_m)} \leq \frac{C_{n,k,m} \|F\|_{\mathcal{H}^k(X_m)}}{\sqrt{P}}, \quad  \textnormal{and}\\
       &  \textnormal{Lip}\, \left( \sum_{i=1}^P W_i \circ \bm{\sigma} (A_i \cdot + B_i)\right) \leq \|\nabla F(0)\|_{\textnormal{F}} + C_{n,k,m}\,  \|F\|_{\mathcal{H}^k(X_m;\,\R^d)},
    \end{align*}  
where $C_{n,k,m}>0$ depends only on \((n,k,m)\) and $ \|\nabla F(0)\|_{\textnormal{F}}$ is the Frobenius norm of the Jacobian matrix $\nabla F(0)$.
\end{corollary}

\begin{proof}
  Fix any $i\in\{1,\ldots, n\}$.  Define the dilated function
\[
\tilde F_i(x) \;=\; F_i(m\,x),
\qquad x\in X = [-1,1]^n.
\]
We deduce that
\[
\|\tilde F_i\|_{\mathcal H^k(X)}
\;\le\;
m^{\,k-n/2}\,\|F_i\|_{\mathcal H^k(X_m)}.
\]
By Lemma \ref{lm:Sobolev_Barron}, there exists a constant $C_{n,k}>0$ such that
\begin{equation*}
    v_{\tilde{F}_i, 2} \leq C_{n,k} \, \|\tilde F_i\|_{\mathcal H^k(X)} \leq C_{n,k} \, m^{\,k-n/2}\,\|F_i\|_{\mathcal H^k(X_m)}.
\end{equation*}
Besides, by Lemma \ref{lm:Barron}, there exist \((w_j^i,a_j^i,b_j^i)\in\R^{n+2}\) for \(j=1,\dots,P\) and $C_n>0$ such that
\begin{align*}
  &  \Bigl\|\,\tilde{F}_i(\cdot) 
  \;-\;\sum_{j=1}^{P} w_j^i\,\sigma\bigl(\langle a_j^i,\,\cdot\,\rangle + b_j^i\bigr)
\Bigr\|_{\mathcal{C}(X)}
\;\le\;
\frac{C_n\,v_{\tilde{F}_i,2}}{\sqrt{P}}\; \leq \; \frac{C_n\,C_{n,k} \, m^{\,k-n/2}\,\|F_i\|_{\mathcal H^k(X_m)}}{\sqrt{P}},  \\
& \text{Lip} \left( \sum_{j=1}^{P} w_j^i\,\sigma\bigl(\langle a_j^i,\,\cdot\,\rangle + b_j^i\bigr)  \right) \leq m \|\nabla F_i(0)\| + 2 C_{n,k} \, m^{\,k-n/2}\,\|F_i\|_{\mathcal H^k(X_m)}.
\end{align*}
Recalling the definiton of $\tilde{F}_i$, we have 
\[
\Bigl\|\,F_i(\cdot) 
  \;-\;\sum_{j=1}^{P} w_j^i\,\sigma\bigl(\langle a_j^i/m,\,\cdot\,\rangle + b_j^i\bigr)
\Bigr\|_{\mathcal{C}(X_m)}
\;\le\;
\frac{ C_n\,C_{n,k}\, m^{k-n/2}\,\|F_i\|_{\mathcal H^k(X_m)}}{\sqrt{P}}.
\] 
Moreover,
\[
    \text{Lip} \left( \sum_{j=1}^{P} w_j^i\,\sigma\bigl(\langle a_j^i/m,\,\cdot\,\rangle + b_j^i\bigr)  \right) \leq \|\nabla F_i(0)\| + 2 C_{n,k} \, m^{\,k-1-n/2}\,\|F_i\|_{\mathcal H^k(X_m)}.
\]
Finally, the desired estimates come from the definition of the norms of vector-valued functions and matrices.
\end{proof}

\begin{remark}[$\mathbb{L}^{\infty}$-approximation rate]\label{rem:approx}
   Lemma~\ref{lm:Barron} and Corollary~\ref{cor:Barron_dd} establish the universal approximation rate for shallow NNs in the $\mathbb{L}^{\infty}$ norm. Our main technique is drawn from \cite{klusowski2018approximation}. We also note that comparable rates appear in \cite[Prop.~1]{bach2017breaking} and \cite[Thm.~3]{siegel2023optimal}, where, using deep tools from geometric discrepancy theory \cite{matousek1996improved}, one obtains a best rate of $P^{-1/2-3/2n}$ (in the SA-NODE case, $n=d+1$), and network’s Lipschitz constant can be uniformly bounded (independent of $P$). Consequently, the convergence rate in Theorem~\ref{th:rates} can likewise be improved to
\[
  P^{-\frac{1}{2}-\frac{3}{2(d+1)}}\,,
\]
where $d$ is the dimension of the dynamical system. 
\end{remark}

\begin{remark}[$\mathbb{L}^2$-approximation rate]\label{rem:sigma}
The $\mathbb{L}^2$-approximation rate (also of the order of $P^{-1/2}$) for ReLU networks follows directly from Hölder's inequality and the previously established $\mathbb{L}^\infty$ result.  
An alternative proof of this $\mathbb{L}^2$ rate can be obtained via Maurey’s inequality \cite[Lem.~2]{pisier1981remarques} (see also \cite{ma2022barron}).  
This method remains valid for a broader class of activation functions beyond ReLU and its powers used in the $\mathbb{L}^\infty$ setting.  
In particular, it was shown in \cite[Thm.~4]{li2020complexity} that the $P^{-1/2}$ approximation rate in the $\mathbb{L}^2$ norm holds when the activation function $\sigma$ is twice weakly differentiable and satisfies the integrability condition:
\begin{equation}\label{ass:activation}
  \int_{\mathbb{R}} \bigl|\sigma''(x)\bigr|\,(1 + |x|)\,dx < \infty.
\end{equation}
In particular, the sigmoid function meets \eqref{ass:activation}. Therefore, by a parallel argument in the next subsection, Theorem \ref{th:rates} can be reformulated to give the same \(P^{-1/2}\)–rate in the \(\mathbb{L}^2\)–error (with respect to \(z_0\)) for every \(\sigma\) satisfying \eqref{ass:activation}.
\end{remark}

\subsection{Proof of Theorem \ref{th:rates}}\label{sec:proof_main}
The proof is stated in the following two steps.

\medskip 
\noindent\textbf{Step 1} (Approximation of $f$). Under Assumption \ref{ass1}, the reachable set of \eqref{eq:ode},
\[
\Omega_T(K) = \bigl\{\z_{z_0}(t)\,\bigm|\;z_0\in K,\;t\in[0,T]\bigr\},
\]
is compact. Taking 
\begin{equation}\label{eq:m}
    m = T\, \max \,\Bigl\{\,1,\;\sup_{z_0\in K}\|z_0\|\, e^{L T}\Bigr\},
\end{equation}
by Grönwall’s lemma, we have 
\[
X_m \coloneqq [-m,m]^{d+1}\;\supseteq\;\Omega_T(K)\times[0,T].
\]
By Assumption \ref{ass2}, 
\[
f |_{X_m} \in \mathcal{H}^k(X_m;\, \R^d),\quad \text{ with }\, k>(d+1)/2+2.
\]
Therefore, according to Corollary \ref{cor:Barron_dd}, for any $P \geq 3$, there exists parameter $\Theta = (W_i,A^1_i,A^2_i,B_i)_{i=1}^P$ such that 
 \begin{align}
        &\left\| f(\cdot,\cdot) - f_{\Theta} (\cdot,\cdot) \right\|_{\mathcal{C}(X_m;\, \R^{d})} \leq \frac{C_{d,k,m} \|f\|_{\mathcal{H}^k(X_m)}}{\sqrt{P}},  \label{eq:approx_barron} \\
        & \text{Lip} \left(f_{\Theta} (\cdot,\cdot)\right) \leq \|\nabla f(0,0)\|_{\text{F}} +C_{d,k,m} \|f\|_{\mathcal{H}^k(X_m)}, \label{eq:lip}
    \end{align}
   where $C_{d,k,m}>0$ depends only on \((d,k,m)\).

\medskip
\noindent\textbf{Step 2} (Decomposition and estimates of the error).
For any $(z_0, t) \in K\times [0,T]$, by the triangle inequality, 
\begin{align*}
 & \| \z_{z_0}(t) - \x_{z_0}(t) \|\\
   = &\left\| \int_0^t f(\z_{z_0}(s),s) - f_{\Theta}(\z_{z_0}(s),s) + f_{\Theta}(\z_{z_0}(s),s) - f_{\Theta}(\x_{z_0}(s),s) \,ds \right\|\\
   \leq & \underbrace{\int_{0}^t \|f(\z_{z_0}(s),s) - f_{\Theta}(\z_{z_0}(s),s)\| \,ds}_{=\colon\gamma_1} + \underbrace{\int_{0}^t \|f_{\Theta}(\z_{z_0}(s),s) - f_{\Theta}(\x_{z_0}(s),s)\| \,ds}_{=\colon\gamma_2}.
\end{align*}
Since \(\z_{z_0}(s)\in\Omega_T\) for all \(s\in[0,T]\), it follows that \((\z_{z_0}(s),s)\in X_m\). Hence, by \eqref{eq:approx_barron}, for any \(t\in[0,T]\) we obtain
\[
\gamma_1 \;\le\; \underbrace{C_{d,k,m}\,\|f\|_{\mathcal{H}^k(X_m)}}_{=\colon C_1}\;\frac{t}{\sqrt{P}} .
\]
On the other hand, by \eqref{eq:lip}, we have
\[
\gamma_2 \leq \underbrace{\left( \|\nabla f(0,0)\|_{\text{F}} +C_{d,k,m} \|f\|_{\mathcal{H}^k(X_m)} \right)}_{=\colon C_2} \int_{0}^t \|\z_{z_0}(s)-\x_{z_0}(s)\| \,ds .
\]
Here, the constants \(C_1\) and \(C_2\) depend only on \(T\), \(K\), and \(f\), since \(d\) is the dimension of the state variable of \(f\), and \(m\) is an explicit function of \(T\), \(K\), and the Lipschitz constant of \(f\), as defined in \eqref{eq:m}.
Combining the three preceding inequalities yields, for all \((z_0,t)\in K\times [0,T]\),
\[
\bigl\|\z_{z_0}(t)-\x_{z_0}(t)\bigr\|
\;\le\;
C_1\,\frac{t}{\sqrt{P}}
\;+\;
C_2\int_{0}^{t} \bigl\|\z_{z_0}(s)-\x_{z_0}(s)\bigr\|\,\mathrm{d}s,
\]
Applying Gr\"{o}nwall's lemma to the previous inequality, we deduce that for any $z_0\in K$,
\begin{equation*}
    \sup_{t\in [0,T]} \bigl\|\z_{z_0}(t)-\x_{z_0}(t)\bigr\| \leq \frac{ T C_1 e^{C_2T}}{\sqrt{P}}.
    \end{equation*}
The conclusion of Theorem \ref{th:rates} follows.

\medskip
For completeness, we provide the proof of the explicit constant stated in Remark~\ref{rem:constant}.
\begin{proof}[Proof of Remark~\ref{rem:constant}]
In the setting of Remark~\ref{rem:constant}, the constant \(m\) appearing in the previous proof can be specified as
\[
    m = T e^{L T}.
\]
Hence, the corresponding domain is
\[
    \Omega_{L,T,d} = X_m = [-m, m]^{d+1}.
\]

From the proof of Corollary~\ref{cor:Barron_dd}, and using that 
\( f \in \mathcal{H}^{\,d/2 + 3}_{\mathrm{loc}}(\mathbb{R}^{d+1}; \mathbb{R}^d) \) (so \(k = d/2 + 3\)), 
we can make the constants in estimates~\eqref{eq:approx_barron}--\eqref{eq:lip} explicit:
\begin{align*}
    &\| f - f_{\Theta} \|_{\mathcal{C}(X_m;\, \mathbb{R}^d)}
        \le \frac{C_d\, m^{5/2}\, \| f \|_{\mathcal{H}^{\,d/2 + 3}(\Omega_{L,T,d})}}{\sqrt{P}}, \\[4pt]
    &\mathrm{Lip} \big(f_{\Theta}\big)
        \le \|\nabla f(0,0)\|_{\mathrm{F}}
          + C_d\, m^{3/2}\, \| f \|_{\mathcal{H}^{\,d/2 + 3}(\Omega_{L,T,d})},
\end{align*}
where \(C_d > 0\) is a universal constant depending only on the dimension \(d\), arising from the product of the constants in Lemma~\ref{lm:Barron} and Lemma~\ref{lm:Sobolev_Barron} (with \(n = d + 1\) and \(k = d/2 + 3\)).

Since \(f\) is \(L\)-Lipschitz in space, we have
\[
    \|\nabla f(0,0)\|_{\mathrm{F}} \le \sqrt{d}\,L.
\]
Combining these estimates gives the following explicit forms of the constants \(C_1\) and \(C_2\) from the previous proof:
\begin{align*}
    C_1 &= C_d\, m^{5/2}\, \| f \|_{\mathcal{H}^{\,d/2 + 3}(\Omega_{L,T,d})}
          = C_d\, e^{\frac{5LT}{2}}\, \| f \|_{\mathcal{H}^{\,d/2 + 3}(\Omega_{L,T,d})}, \\[4pt]
    C_2 &\le \sqrt{d}\,L
          + C_d\, m^{3/2}\, \| f \|_{\mathcal{H}^{\,d/2 + 3}(\Omega_{L,T,d})}
          = \sqrt{d}\,L + C_d\, e^{\frac{3LT}{2}}\, \| f \|_{\mathcal{H}^{\,d/2 + 3}(\Omega_{L,T,d})}.
\end{align*}
Therefore, the constant \(C_{T,K,f}\) satisfies
\begin{equation*}
    \begin{split}
         C_{T,K,f}
    &= T C_1 e^{C_2 T}\\
    &\le C_d\, T\,
       \| f \|_{\mathcal{H}^{\,d/2 + 3}(\Omega_{L,T,d})}\,
       \exp\Bigl(
           \tfrac{5}{2} L T
           + \sqrt{d}\,L
           + C_d\, e^{\frac{3LT}{2}}\, \| f \|_{\mathcal{H}^{\,d/2 + 3}(\Omega_{L,T,d})}
       \Bigr),
    \end{split}
\end{equation*}
which gives the desired explicit bound~\eqref{eq:explicit-C}.
\end{proof}

\subsection{Proof of Theorem \ref{th:Trans}}
By Assumption \ref{ass1} and the fact that $\sigma$ is the ReLU function, we have
\begin{equation*}
    f,f_{\Theta} \in \mathbb{L}^1\left([0,T]; \mathcal{W}^{1,\infty}_{\text{loc}}(\R^d;\R^d)\right), \quad \text{and} \quad  \frac{\|f\|}{1+\|x\|},\frac{\|f_{\Theta}\|}{1+\|x\|} \in \mathbb{L}^1\left([0,T]; \mathbb{L}^{\infty}(\R^d)\right),
\end{equation*}
where $\mathcal{W}^{1,\infty}_{\text{loc}}$ is the local Sobolev space. By \cite[Prop.\@ 4 and Rem.\@ 7]{ambrosio2014continuity}, we have the following representations of the solutions of \eqref{eq:trans} and \eqref{eq:neural_trans}:
\begin{equation}\label{eq:represent}
    \rho(\cdot, t) = \phi_t \# \rho_0, \quad   \rho_{\Theta}(\cdot, t) = \phi_{\Theta, t} \# \rho_0, \quad \forall t\in [0,T],
\end{equation}
where $\#$ is the push-forward operator, $ \phi_t $ (resp.\@ $\phi_{\Theta,t}$) is the mapping from the initial state to the solution of \eqref{eq:ode} (resp.\@ \eqref{eq:semi-nODE}) at the time $t$. Therefore, $\rho(\cdot, t), \rho_{\Theta}(\cdot, t) \in \mathcal{P}(\R^d)$, and they are supported in a compact set by Gr\"{o}nwall's inequality (since $\text{supp}(\rho_0)$ is compact). Therefore, $\mathbb{W}_1(\rho(\cdot, t), \rho_{\Theta}(\cdot, t))$ can be calculated by \cite[Eq.\@ 6.3]{villani2009optimal}:
\begin{equation*}
    \mathbb{W}_1(\rho(\cdot, t), \rho_{\Theta}(\cdot, t)) = \sup_{\text{Lip}(g)\leq 1 } \int_{\R^d} g(x)\, d \left(\rho (x, t) - \rho_{\Theta}(x, t)\right).
\end{equation*}
Let $K$ denote the support set of $\rho_0$. By \eqref{eq:represent}, we have
\begin{equation*}
\begin{split}
    \mathbb{W}_1(\rho(\cdot, t), \rho_{\Theta}(\cdot, t)) & = \sup_{\text{Lip}(g)\leq 1 } \int_{K} g( \phi_t(z) ) -  g( \phi_{\Theta, t}(z) ) \,d \rho_0(z) \\
    &\leq \int_{K} \| \phi_t(z) - \phi_{\Theta, t}(z) \| \,d \rho_0(z).
\end{split}
\end{equation*}
For any $z\in K$, by Theorem \ref{th:rates}, there exists $C_{T,K,f}$ such that for any $z\in K$,
\[ \| \phi_t(z) - \phi_{\Theta, t}(z) \| \leq \frac{C_{T,K,f}}{\sqrt{P}}. \]
The conclusion follows.

\section{Training Strategy for SA-NODEs}
\label{sec:control}

This section is devoted to formulating optimization problems for training the parameters of SA-NODEs \eqref{eq:semi-nODE} to approximate a given dynamical system in the time horizon $[0,T]$ with initial points in a compact set $K$:
\begin{equation*}
    \begin{cases}
        \dot \z_{z_0} = f(\z_{z_0}, t), \quad & t\in [0,T], \\
        \z_{z_0}(0) = \z_0, \quad & z_0\in K.
    \end{cases}
\end{equation*}

The most straightforward setting arises when the vector field \(f(x,t)\) is known at some spatial locations and time samples. In such cases, a direct interpolation of \(f\) using a shallow NN is feasible and yields an SA-NODE.

However, in practice, we typically do not have direct access to samples of the vector field. Instead, the more commonly available observations are the positions of sensors moving along the flow \(\z_{z_0}(t)\), generated by the dynamical system with distinct initializations \(z_0\).

From this perspective, the training task becomes an inverse problem, where the goal is to infer the underlying dynamics from observed sensor trajectories. 
We begin by considering the training problem in the continuous data setting (infinite sensors and continuous time) for simplicity of presentation. This setting naturally leads to an optimal control formulation, given in \eqref{pb:OC}. In Theorem~\ref{th:grad}, we derive the gradient of the objective functional using an adjoint variable, which plays a central role in the implementation of gradient-based optimization methods. A discretized version of the optimal control problem, appropriate for finite training datasets, is presented in \eqref{eq:loss_dis}. Furthermore, a similar training framework can be extended to transport equations, as discussed in Remark~\ref{remark:trans}.

To determine the optimal parameter \(\Theta = (W, A^1, A^2, B)\) for SA-NODEs \eqref{eq:semi-nODE} in the continuous-data regime, we consider the following optimal control problem:

\begin{equation}
\label{pb:OC}
\begin{aligned}
\inf_{\Theta}\quad & L(\Theta) = \int_{0}^{T}\!\!\int_{K} \bigl\|\z_{\z_0}(t) - \x_{\z_0}(t)\bigr\|^2 \,\mathrm{d}\z_0\,\mathrm{d}t
  + \lambda \,g(\Theta), \\[0.6em]
\text{s.t.}\quad & \dot{\x}_{\z_0}(t) = f_{\Theta}\bigl(\x_{\z_0}(t),t\bigr), 
  \; \x_{\z_0}(0) = \z_0,\;\forall\,\z_0\in K.
\end{aligned}
\end{equation}
\color{black}
where \(g\) denotes a general regularization term, preceded by a positive coefficient \(\lambda\), and $f_{\Theta}$ is the vector field of \eqref{eq:semi-nODE}. Even though the approximation rate is established in the $\mathbb{L}^{\infty}$-norm, we use the $\mathbb{L}^2$-residual as the fidelity term. This choice is standard in regression tasks and is more amenable to the gradient descent algorithm.

For the choice of \(g\), we propose several options. First, the \(\ell^p\)-norm of \(\Theta\) is a classical choice in supervised learning. Second, the Lipschitz constant \eqref{eq:L_constant} of SA-NODE is effective for promoting generalization in a distributional sense, see \cite[Sec.~3]{liu2025representation} for related discussion. Third, other norms associated with shallow NNs may also be used, such as the extended Barron norm, the variation norm, and the Radon–BV seminorm, see \cite{li2024function} for a discussion of their equivalence.

\medskip

Considering $x_{z_0}$ as an implicit function of $\Theta$, by the classical adjoint method \cite[p.\@ 261-265]{luenberger1997optimization}, we obtain the gradient of the loss function $L$ in the following theorem.

\begin{theorem}\label{th:grad} For any $(\Theta,x,t)\in \R^{2Pd(d+1)} \times \R^d\times [0,T]$, let $\tilde f (\Theta, x, t) = f_{\Theta}(x,t)$. Assume that $g$ is locally Lipschitz continuous. It holds that
    \[
    \nabla L(\Theta) = \int_0^T \int_{K}  \frac{\partial \tilde{f}}{\partial\Theta} (\Theta, \x_{z_0}(t),t )^{\top}\, \boldsymbol{a}_{z_0} 
 (t)  dz_0 dt + \lambda \,\nabla g(\Theta), \quad \text{for } \Theta \text{ a.e.,}
    \]
    where $\x_{z_0}$ satisfies the SA-NODE \eqref{eq:semi-nODE} and $\boldsymbol{a}_{z_0}$ satisfies the adjoint equation
    \[
    \begin{cases}
        -\dot{\boldsymbol{a}}_{z_0}(t) =   \frac{\partial\tilde{f}}{\partial x} (\Theta, \x_{z_0}(t),t)^{\top} {\boldsymbol{a}}_{z_0} (t)+ 2(\x_{z_0}(t) - \z_{z_0}(t)  ), \quad& t\in [0,T], \\[0.6em]
        \boldsymbol{a}_{z_0} (T)  =   0,& z_0\in K.
    \end{cases}
    \]
\end{theorem}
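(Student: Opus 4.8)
The plan is to recast $L$ as a composite functional of the control $\Theta$ through the state $\x_{z_0}$, which depends on $\Theta$ implicitly via the SA-NODE \eqref{eq:semi-nODE}, and then apply the classical Lagrangian/adjoint calculus. First I would fix $z_0 \in K$ and treat the inner functional $J_{z_0}(\Theta) = \int_0^T \|\z_{z_0}(t) - \x_{z_0}(t)\|^2 \, dt$ as a constrained optimization in $(\x_{z_0}, \Theta)$, the constraint being $\dot{\x}_{z_0} = \tilde f(\Theta, \x_{z_0}, t)$ with $\x_{z_0}(0) = z_0$. Introducing the adjoint (costate) variable $\boldsymbol{a}_{z_0}(t)$ as a Lagrange multiplier for this ODE constraint, I would form the Lagrangian $\int_0^T \left[ \|\z_{z_0} - \x_{z_0}\|^2 + \langle \boldsymbol{a}_{z_0}, \dot{\x}_{z_0} - \tilde f(\Theta, \x_{z_0}, t)\rangle \right] dt$, integrate the $\langle \boldsymbol{a}_{z_0}, \dot{\x}_{z_0}\rangle$ term by parts, and impose stationarity with respect to variations $\delta \x_{z_0}$. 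The boundary term at $t = 0$ vanishes because $\delta\x_{z_0}(0) = 0$ (the initial datum is fixed), which forces the transversality condition $\boldsymbol{a}_{z_0}(T) = 0$, and the interior terms yield the backward adjoint equation $-\dot{\boldsymbol{a}}_{z_0} = \tfrac{\partial \tilde f}{\partial x}(\Theta, \x_{z_0}, t)^\top \boldsymbol{a}_{z_0} + 2(\x_{z_0} - \z_{z_0})$, exactly as stated.

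Next, with the adjoint equation in force, the directional derivative of $J_{z_0}$ in a direction $\delta\Theta$ collapses to the single term $-\int_0^T \langle \boldsymbol{a}_{z_0}, \tfrac{\partial \tilde f}{\partial \Theta}(\Theta, \x_{z_0}, t)\, \delta\Theta\rangle\, dt$, whose sign convention I would track carefully. Comparing with the claimed formula, one sees that the right-hand side of the adjoint equation in the theorem is written with $+2(\x_{z_0} - \z_{z_0})$, which is consistent provided one is minimizing $\|\x_{z_0} - \z_{z_0}\|^2$; I would double-check the sign bookkeeping so that $\nabla J_{z_0}(\Theta) = \int_0^T \tfrac{\partial \tilde f}{\partial \Theta}(\Theta, \x_{z_0}, t)^\top \boldsymbol{a}_{z_0}(t)\, dt$ comes out with the correct orientation. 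Then I would integrate over $z_0 \in K$, using the fact that $\x_{z_0}$ and $\boldsymbol{a}_{z_0}$ depend measurably on $z_0$ and that differentiation under the integral sign in $z_0$ is justified by the local Lipschitz and boundedness estimates (the flow stays in a compact set by Gronwall, cf.\ Lemma \ref{lm:stable}), to obtain the first term of $\nabla L(\Theta)$. Finally, the regularization term $\lambda g(\Theta)$ contributes $\lambda \nabla g(\Theta)$ directly, and one must note that $g$ is only piecewise smooth (because of the absolute values $|W_i|$ and the $\ell^2$-norms $\|A_i^1\|_{\ell^2}$), so the identity holds for $\Theta$ a.e., as stated.

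The main obstacle is the rigorous justification of the differentiability of $\Theta \mapsto \x_{z_0}(\cdot)$ and of the interchange of $\nabla_\Theta$ with the two integrals (over $t$ and over $z_0$). This requires showing that the solution map of \eqref{eq:semi-nODE} is $\mathcal{C}^1$ in $\Theta$ — which follows from smooth dependence of solutions of ODEs on parameters, since $\tilde f$ is smooth in $(\Theta, x)$ away from the kink locus of ReLU and the kink set is Lebesgue-null in $x$ — together with uniform-in-$z_0$ bounds on the sensitivity equations so that dominated convergence applies. Everything else (the integration by parts, the derivation of the adjoint system, the stationarity computation) is the standard adjoint-method calculation referenced in \cite[p.\@ 261--265]{luenberger1997optimization}, and I would present it compactly rather than in full detail.
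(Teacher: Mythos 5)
Your proposal follows precisely the route the paper takes: the paper omits the proof and cites the classical adjoint/Lagrangian calculation in \cite[Prop.\@ 1, p.\@ 262]{luenberger1997optimization} (noting the fixed-$z_0$ version in \cite[Thm.\@ 1]{massaroli2020dissecting}), which is exactly the integration-by-parts derivation of the costate equation and gradient representation that you sketch. Your remarks on the ReLU-induced nonsmoothness of $\tilde f$ and $g$ explaining the ``for $\Theta$ a.e.'' qualifier, and on well-posedness of the backward adjoint equation, also mirror the paper's own remark following the theorem.
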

We omit the proof, which is a consequence of \cite[Prop.\@ 1, p.\@ 262]{luenberger1997optimization}. A similar result is proved for fixed $z_0$ in \cite[Thm.\@ 1]{massaroli2020dissecting}. This theorem delineates the general procedure employed to train an SA-NODE, which consists in optimizing the coefficients via the gradient descent algorithm, where the gradient is computed by solving the adjoint equation.

\begin{remark}
    In our case, the activation function \(\sigma\) is ReLU. Consequently, function \(\tilde{f}\) in Theorem \ref{th:grad} is locally Lipschitz continuous, and thus is differentiable with respect to \(\Theta\) and \(x\) almost everywhere. This implies that the representation formula of \(\nabla L\) holds for \(\Theta\) almost everywhere.
In the adjoint equation, for any fixed \(\Theta\), the Lipschitz continuity of \(\tilde{f}\) with respect to \(x\) ensures that the vector field has a uniformly bounded divergence on \(\boldsymbol{a}_{z_0}\). This implies the well-posedness of the adjoint equation. 
\end{remark}

 Finally, since in concrete applications it is not possible to deal with a continuum of points, we ought to discretize the integrals appearing in the loss function. To this end, assume the training dataset has the structure $\{\z_k(t_l) \},k=1,2,\cdots, N,\ l=1,2,\cdots,M$, where $\z_k$ is the $k$-th trajectory among $N$ trajectories (with $N$ initial positions) and $t_l$ refers to the $l$-th step of $M$ total time steps. Then we obtain the finite-dimensional counterpart of \eqref{pb:OC}:
\begin{equation}
    \hat{L}(\Theta) = \frac{1}{NM}\sum_{k=1}^{N} \sum_{l=1}^{M} \left( \z_k(t_l) - \x_k(t_l, \Theta) \right)^2 + \lambda \,g(\Theta).
\label{eq:loss_dis}
\end{equation}
Here, $\x_k(t_l; \Theta)$ is the model's prediction at the time $ t_l $ of trajectory $k$. The gradient of \(\hat{L}\) can be computed similarly to Theorem \ref{th:grad} in this discrete context, with the backpropagation algorithm fulfilling the role of the adjoint equation.

\medskip
For the training of the transport equation, we employ the following remark to recover the ODE training strategy.
\begin{remark}[Training strategy for transport equations]
    \label{remark:trans}
   To train the parameters in the neural transport equation \eqref{eq:neural_trans} for approximating the original PDE \eqref{eq:trans},  
    we consider the corresponding characteristic system associated with \eqref{eq:neural_trans}, given by
\begin{equation}
\label{eq:neural_trans_charac}
\begin{dcases}
    \frac{d\x}{dt} = \sum_{i=1}^P W_i  \circ \Sigma(A^1_i \x + A^2_i t + B_i), \\
    \frac{d\rho}{dt} = -{\rm{div}}_{x} \left(\sum_{i=1}^P W_i \circ \Sigma(A^1_i \x + A^2_i t + B_i)\right) \rho,
\end{dcases}
\end{equation}
where $\rho$ is the density of the flow along the trajectory \(\x(t)\).
Since the activation function \(\Sigma\) is known explicitly, the second equation is equivalent to
\begin{equation*}
\label{eq:oderho}
\frac{d\rho}{dt} = -\rho \left( \sum_{i=1}^P \left\langle W_i ,\, \mathrm{diag}(A^1_{i})\Sigma'\left(A^1_i \x + A^2_i t + B_i\right) \right\rangle \right),
\end{equation*}
where $\mathrm{diag}(A_i^1)$ is the diagonal part of $A^1_i$. Indeed, we can recover the parameters by applying our ODE framework to the first line of \eqref{eq:neural_trans_charac}, which governs the trajectory positions and yields the same loss function as in \eqref{eq:loss_dis}.
However, in the transport setting, we also have access to additional information on the density along the trajectories, provided by the second line of \eqref{eq:neural_trans_charac}.
Incorporating a residual error term for the density into the loss function has the beneficial side effect of enhancing the generalization performance of the SA-NODE. This enriched loss formulation is the one we adopt in the numerical experiments.
\end{remark}

\section{Numerical Experiments}
\label{sec:experiments}

\noindent
In this section, we present several numerical results to demonstrate the capability of SA-NODEs in accurately simulating both ODEs and transport equations. Additionally, we conduct experiments to compare the performance of SA-NODEs {\eqref{eq:semi-nODE}}, with that of vanilla NODEs {\eqref{eq:trad-NODE}}, providing evidence for the superior effectiveness and precision of SA-NODEs in these contexts. The implementation of the code is carried out in Python using the PyTorch library for deep learning. All experiments were performed on a workstation with two 24-core Intel Xeon Platinum 8269CY CPUs, one Nvidia RTX A6000 GPU, 512GB RAM, and an Ubuntu 20.04 operating system that implements PyTorch. The codes for all examples are publicly available from the GitHub repository \url{https://github.com/DCN-FAU-AvH/SA-NODEs}.
 
\subsection{Simulations of ODEs}
The dataset used for training and evaluation consists of batches of trajectories, computed from the exact system using the fourth-order Runge-Kutta method over the time interval $[0, 5]$ with a time step of $0.05$. The initial conditions are sampled from a grid with coordinates ranging in $[-2, 2]$ in increments of $0.2$ in both $z_1$ and $z_2$ dimensions. This results in a total of $441$ trajectories, with only half of them randomly chosen to be utilized for training (i.e.\@ 220 trajectories). Note that in the forthcoming pictures we have limited ourselves to plotting only $100$ trajectories for clarity. We demonstrate that even with this relatively limited amount of data, the SA-NODE is capable of capturing the underlying dynamical system. In the following figures, red lines represent the simulated results of the training dataset by NODEs, while green lines represent the simulated results of the testing dataset by NODEs. These green indicators are crucial for assessing the model's generalization capability and how well it can predict the dynamics of unseen initial data. The neural network consists of 1000 neurons in the hidden layer and ReLU as the activation function. For training, we use the Adam optimizer with an initial learning rate of $10^{-3}$, decaying it by a factor of $0.8$ every $1000$ epochs over a total of $10000$ epochs. The weight parameter \(\lambda\) in the loss function~\eqref{eq:loss_dis} is set to \(10^{-4}\), and the regularization function \(g\) is defined using the Lipschitz constant in~\eqref{eq:L_constant}.

Figure \ref{fig:ODEs} summarizes our findings: on the left, we plot the evolution simulated by the SA-NODEs; in the center, the solution to the exact system; and on the right, the mean and standard deviation of errors. Here, the error for trajectory $k$ is defined by $e_k(t) = \|\z_{k}(t) - \x_k(t)\|$. In the right part of Figure \ref{fig:ODEs}, the red (resp.\@ blue) curve represents the mean value of \(e_k\) in the training (resp.\@ testing) set, while the shaded gray bounds indicate the standard deviation of \(e_k\) in the testing set.

\bigskip
\noindent
\textbf{Example 1: Nonlinear Autonomous ODEs}
\medskip

\noindent Nonlinear ODEs present a great challenge due to the complexity and variety of behaviors they exhibit. Unlike linear systems, which have well-understood and predictable solutions, nonlinear systems can show phenomena such as limit cycles, chaos, and bifurcations, making them harder to analyze and approximate. The nonlinear ODE system example is the undamped pendulum, which is described by
\begin{equation}
\label{eq:exam_ODE_auto}
\begin{cases}
    \dot{z}_1 = z_2, \\
    \dot{z}_2 = -\sin(z_1).
\end{cases}
\end{equation}
As shown in Figures \ref{fig:ODE_auto} and \ref{fig:ODE_auto_error}, the SA-NODE captures the behavior of the underlying dynamical system, albeit with a gradual reduction in accuracy over longer time horizons. We conjecture that this is due to the dual nature of this system, which presents periodic trajectories or unbounded trajectories depending on the initial conditions. We also note that the bad performance is mostly concentrated on the testing dataset, meaning that the SA-NODE retains good simulation properties even for this complex system.

\bigskip
\noindent
\textbf{Example 2: Nonlinear Non-Autonomous System}
\medskip

\noindent Nonlinear non-autonomous ODEs can model complex phenomena such as forced oscillations in mechanical systems and varying environmental influences in biological systems. Solving these kinds of ODEs is challenging due to the intricate interplay between nonlinearity and time-dependence, leading to phenomena like bifurcations, chaos, and sensitivity to initial conditions. We consider the following nonlinear non-autonomous ODE system
\begin{equation}
\label{eq:exam_ODE_nonauto}
\begin{cases}
    \dot{z}_1 = z_2, \\
    \dot{z}_2 = z_1 -z_1^3+\delta \cos(\omega t).
\end{cases}
\end{equation}
This is known as the forced Duffing equation, and it is used to model certain damped and driven oscillators, where $\delta$ controls the amount of damping and $\omega$ is the angular frequency of the periodic driving force. In the following experiments, $\delta=0.1$ and $\omega=\pi$. Figure \ref{fig:ODE_nonauto} shows SA-NODEs simulates well with the nonlinear non-autonomous system and Figure \ref{fig:ODE_nonauto_error} further demonstrates the high accuracy.

\begin{figure}
\centering

\begin{subfigure}[t]{0.62\textwidth}
\includegraphics[width=\textwidth]{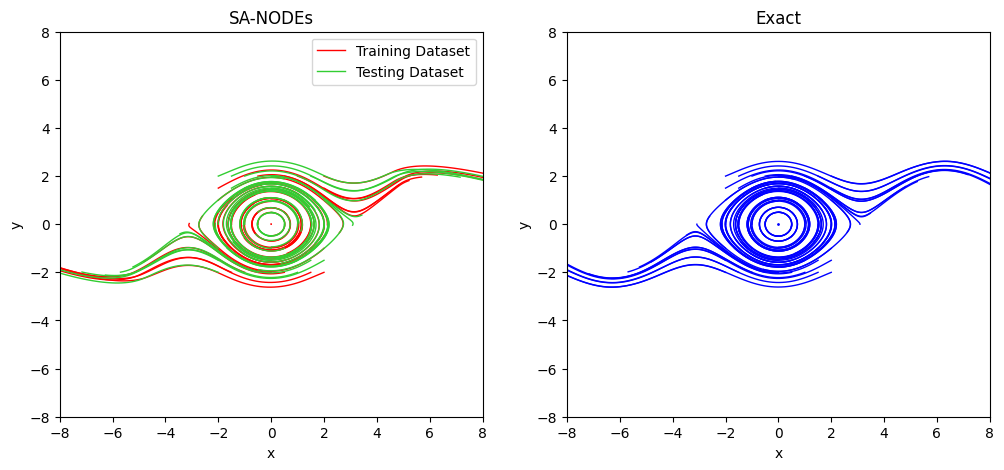}
\caption{SA-NODEs and exact solution of system \eqref{eq:exam_ODE_auto}.} \label{fig:ODE_auto}
\end{subfigure}\hspace*{\fill}
\begin{subfigure}[t]{0.32\textwidth}
\includegraphics[width=\textwidth]{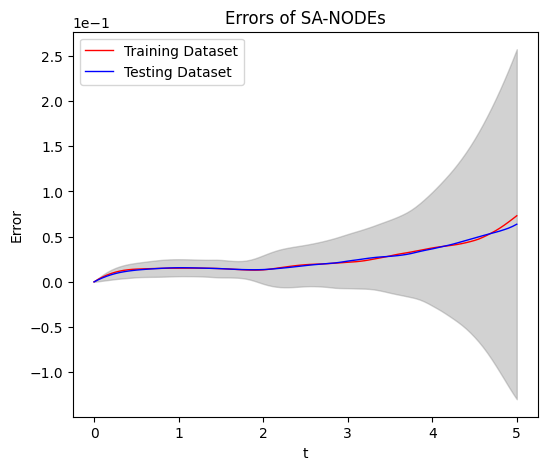}
\caption{Errors of system \eqref{eq:exam_ODE_auto}.} \label{fig:ODE_auto_error}
\end{subfigure}

\begin{subfigure}[t]{0.62\textwidth}
\includegraphics[width=\textwidth]{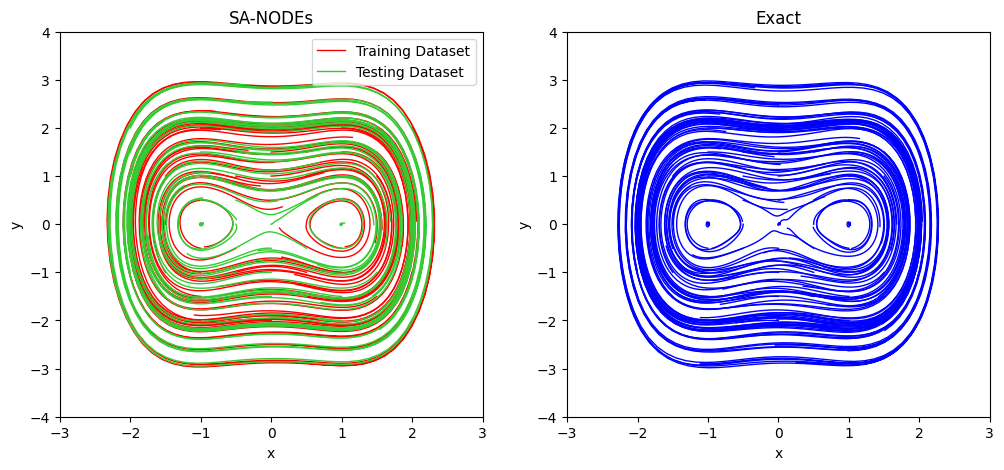}
\caption{SA-NODEs and exact solution of system \eqref{eq:exam_ODE_nonauto}.} \label{fig:ODE_nonauto}
\end{subfigure}\hspace*{\fill}
\begin{subfigure}[t]{0.32\textwidth}
\includegraphics[width=\textwidth]{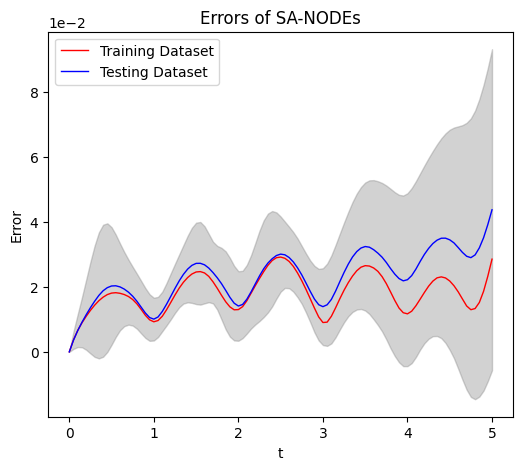}
\caption{Errors of system \eqref{eq:exam_ODE_nonauto}.} \label{fig:ODE_nonauto_error}
\end{subfigure}

\caption{SA-NODEs solution, exact solution and errors of ODE systems.} \label{fig:ODEs}
\end{figure}

\subsection{Comparison with Vanilla NODEs}
\label{sec:com}
\noindent
In this subsection, we compare the approximation performance of vanilla NODEs \eqref{eq:trad-NODE} and SA-NODEs \eqref{eq:semi-nODE}. The comparison will focus on two primary metrics: the accuracy of the models, measured by their errors, and the complexity of the models, quantified by the number of parameters required in the neural network. To ensure a fair comparison, we trained each model for an identical, sufficiently large number of epochs ($10^4$) and used the same learning rate ($10^{-3}$). The only bottleneck was the number of employed neurons $P$. Furthermore, both methods use the $\ell^1$ norm of all NODE parameters as the regularization term in their loss function.

We first present numerical results for the autonomous system \eqref{eq:exam_ODE_auto} and the non-autonomous system \eqref{eq:exam_ODE_nonauto} in Figures \ref{fig:com_autoODE} and \ref{fig:com_nonautoODE}, respectively. Figure \ref{fig:com_autoODE_sol} and \ref{fig:com_nonautoODE_sol} compare the solutions obtained by vanilla NODEs, SA-NODEs, and the exact solution, along with the evolution of testing errors in Figure \ref{fig:com_linearODE_error} and \ref{fig:com_nonautoODE_error}. We observe that SA-NODEs demonstrate better approximation performance in terms of both accuracy and smoothness.

To provide further comparison results, we present in Table \ref{table:comp} the errors and degrees of freedom (DoF) for NODEs with different sizes. Here, \(e_{\text{max}}\) represents the maximum value of the mean error in the testing set, while \(e_T\) represents the terminal value. Recall that $P$ is the number of neurons in each hidden layer, $M$ is the number of time steps, and $d$ is the dimension of the problem. The DoF of the vanilla NODEs is $(d+3)dMP$, while the DoF of the SA-NODEs is $(d+3)dP$. Observing that the number of parameters of SA-NODEs is independent of $M$, this leads to a significant reduction in complexity when $M$ is large.

From Table \ref{table:comp}, we observe that for a fixed \(P\), the error of SA-NODEs is consistently smaller than that of vanilla NODEs, along with a significant reduction in DoF. Additionally, as \(P\) increases, the errors decrease, which is consistent with Theorem \ref{th:rates}.

Additionally, we evaluate the approximation performance of vanilla NODEs and SA‑NODEs under varying numbers of training epochs and dataset sizes. In the left panel of Figure \ref{fig:com_nonautoODE_epoch_data}, we plot the maximum mean error of both models as the number of epochs increases, showing that SA‑NODEs converge significantly faster than vanilla NODEs. In the right panel, we plot the maximum mean error against the training-set size (number of trajectories). We vary the mesh size $\Delta x\in\{1.0,0.5,0.4,0.2,0.1\}$, which corresponds to $12,\,40,\,60,\,220$, and $840$ trajectories, respectively. The results show that SA-NODEs achieve convergence with far fewer trajectories than vanilla NODEs. We summarize the comparison as follows:
\begin{enumerate}
\item With a fixed training dataset size, the training of SA-NODEs converges significantly faster than the one of vanilla NODEs, resulting in reduced computational cost.
\item For small size training datasets, SA-NODEs consistently outperform vanilla NODEs, offering a clear advantage in data-scarce regimes.
\item When both the training dataset and the number of training epochs are sufficiently large, the two models exhibit comparable performance.
\end{enumerate}

\begin{figure}[h!]
\centering
\begin{subfigure}[t]{0.72\textwidth}
\includegraphics[width=\linewidth]{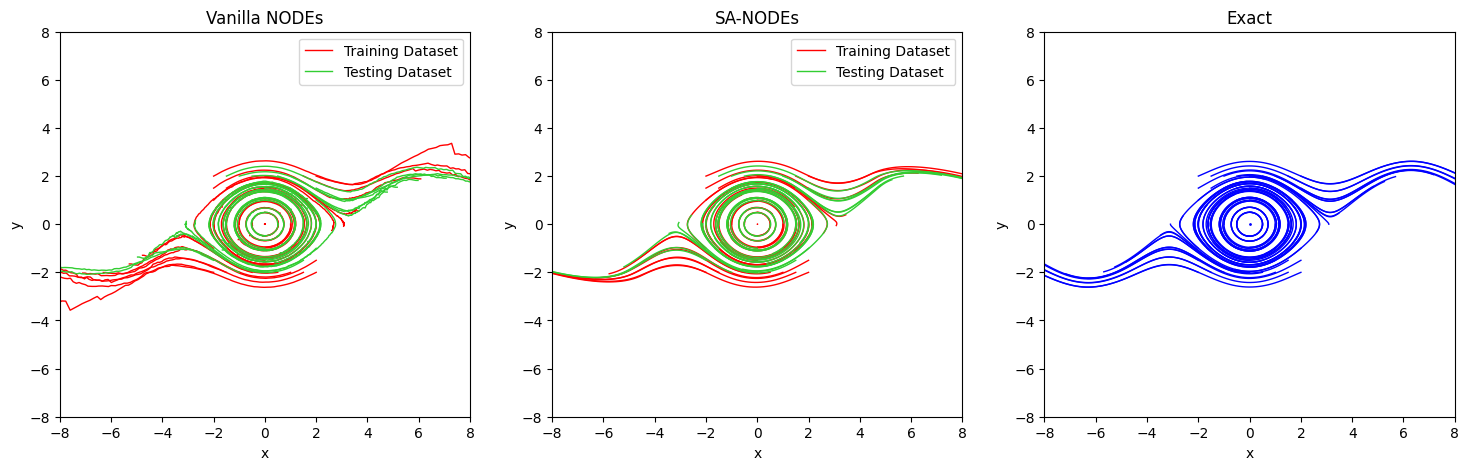}
\caption{Vanilla NODEs, SA-NODEs and exact solution.}
\label{fig:com_autoODE_sol}
\end{subfigure}
\begin{subfigure}[t]{0.27\textwidth}
\includegraphics[width=\linewidth]{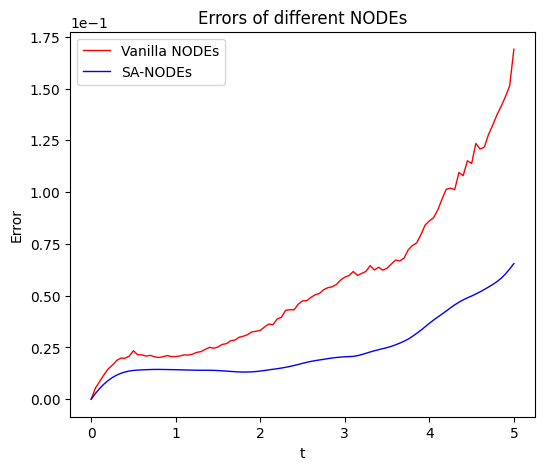}
\caption{Testing errors.}
\label{fig:com_linearODE_error}
\end{subfigure}
\caption{Comparison of vanilla NODEs and SA-NODEs on solutions and errors for system \eqref{eq:exam_ODE_auto}.}
\label{fig:com_autoODE}
\end{figure}

\begin{figure}[h!]
\centering
\begin{subfigure}[t]{0.72\textwidth}
\includegraphics[width=\linewidth]{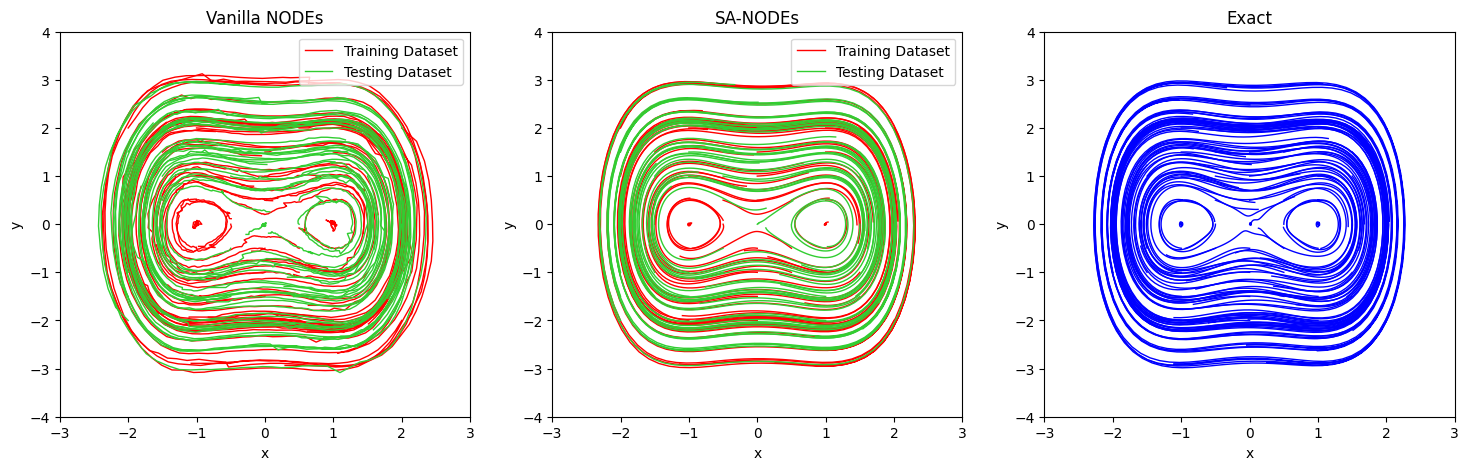}
\caption{Vanilla NODEs, SA-NODEs and exact solution.}
\label{fig:com_nonautoODE_sol}
\end{subfigure}
\begin{subfigure}[t]{0.27\textwidth}
\includegraphics[width=\linewidth]{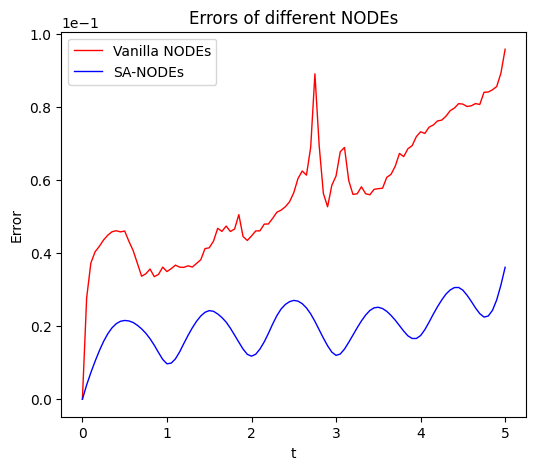}
\caption{Testing errors.}
\label{fig:com_nonautoODE_error}
\end{subfigure}
\caption{Comparison of vanilla NODEs and SA-NODEs on solutions and errors for system \eqref{eq:exam_ODE_nonauto}.}
\label{fig:com_nonautoODE}
\end{figure}

\begin{table}[h!]
\begin{center}
\resizebox{\textwidth}{!}{%
\begin{tabular}{|c|c|c|c|c|c|c|c|}
\hline
\multirow{2}{*}{$P$} & \multirow{2}{*}{Neural ODEs} & \multicolumn{3}{c|}{Autonomous Case} & \multicolumn{3}{c|}{Non-Autonomous Case} \\ \cline{3-8} 
& & \multicolumn{1}{c|}{$e_{\text{max}}$} & \multicolumn{1}{c|}{$e_{T}$} & DoF & \multicolumn{1}{c|}{$e_{\text{max}}$} & \multicolumn{1}{c|}{$e_{T}$} & DoF \\ \hline
\multirow{2}{*}{100} & Vanilla NODEs & 1.88e-01 & 1.88e-01 & 1e+06  & 1.17e+00 & 9.93e-02 & 1e+06\\ \cline{2-8} 
& SA-NODEs & 9.78e-02 & 9.78e-02 & 1e+03 & 5.46e-02 & 5.46e-02 & 1e+03 \\ \hline
\multirow{2}{*}{500} & Vanilla NODEs & 1.69e-01 & 1.69e-01 & 5e+06  & 9.62e-02 & 9.62e-02 & 5e+06  \\ \cline{2-8} 
& SA-NODEs & 8.97e-02 & 8.97e-02 & 5e+03 & 3.61e-02 & 3.61e-02 & 5e+03 \\ \hline
\multirow{2}{*}{1000} & Vanilla NODEs & 1.52e-01 & 1.52e-01 & 1e+07 & 9.57e-02 & 9.57e-02 & 1e+07 \\ \cline{2-8} 
& SA-NODEs & 6.55e-02 & 6.55e-02 & 1e+04 & 3.44e-02 & 3.44e-02 & 1e+04 \\ \hline
\end{tabular}%
}
\caption{Comparison of errors and degrees of freedom (DoF) between vanilla NODEs and SA-NODEs on autonomous and non-autonomous ODEs.}
\label{table:comp}
\end{center}
\end{table}

\begin{figure}
\centering
\begin{subfigure}[t]{0.49\textwidth}
\includegraphics[width=\linewidth]{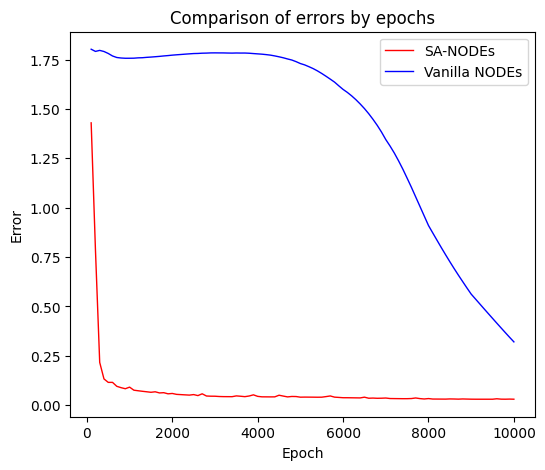}
\label{fig:com_nonautoODE_epoch}
\end{subfigure}
\begin{subfigure}[t]{0.50\textwidth}
\includegraphics[width=\linewidth]{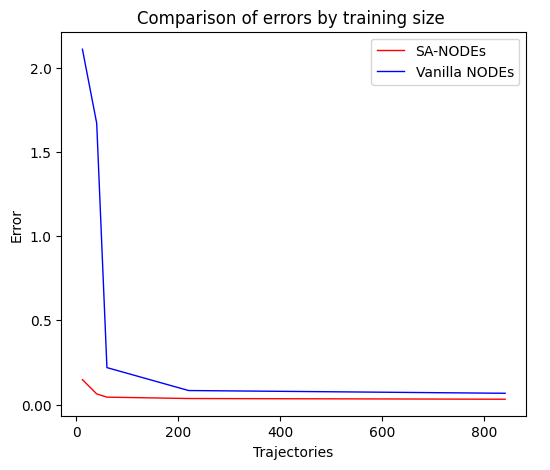}
\label{fig:com_nonautoODE_data}
\end{subfigure}
\caption{Comparison of test errors for vanilla NODEs and SA-NODEs on system \eqref{eq:exam_ODE_nonauto}: 
(Left) Training set size fixed at 220 trajectories, number of training epochs varies from $10$ to $10^4$; 
(Right) Number of training epochs fixed at $10^4$, training set size varies from 12 to 840 trajectories.}
\label{fig:com_nonautoODE_epoch_data}
\end{figure}

\subsection{Simulations of Transport Equations}
\label{sec:neural_trans} In this subsection, we apply SA-NODEs to simulate the solutions of transport equations, thereby demonstrating their approximation performance as investigated in Theorem \ref{th:Trans}. We begin with a toy example of a non-autonomous transport equation to illustrate the training strategy mentioned in Remark \ref{remark:trans}. Using the same method, we then examine the approximation performance on an example of Doswell frontogenesis \cite{doswell1984kinematic}. The training approach relies on reformulating the transport equation as its corresponding characteristic ODE system (see Remark~\ref{remark:trans}), which requires computing derivatives of the activation function.
In this context, we use the Sigmoid activation function instead of ReLU to ensure differentiability.
Thanks to Remark~\ref{rem:sigma}, the Sigmoid-based SA-NODE achieves universal approximation performance comparable to that of the ReLU-based version in the aggregate sense. In the following experiments, we set the number of neurons in each layer to $P=200$. The learning rate is initialized at $10^{-3}$ and adjusted by a scheduler, reducing it by a factor of 0.8 every $10000$ epochs, for a total of $50000$ training epochs.

\bigskip
\noindent
\textbf{Example 3: Non-Autonomous Transport Equation}
\medskip

\noindent We focus on the following two-dimensional non-autonomous transport equation:
\begin{equation}
\label{eq:trans_exam}
\begin{dcases}
    \partial_t \rho(x,y,t) + {\rm{div}} \left(\left(
    \frac{\sin (x)}{1+t^2} , \frac{\sin (y)}{1+t^2}\right)\rho(x,y,t) \right)= 0, \quad (x,y, t)\in \R^2\times  [0,T],\\
    \rho(\cdot,0) = \rho_0 \in \mathcal{M}(\R^2).
\end{dcases}
\end{equation}
Thanks to Remark \ref{remark:trans}, it is sufficient to approximate the following characteristic system of \eqref{eq:trans_exam}:
\begin{equation*}
    \begin{dcases}
    \frac{dx}{dt} = \frac{\sin(x)}{1+t^2}, \\
    \frac{dy}{dt} = \frac{\sin(y)}{1+t^2}, \\
    \frac{d\rho}{dt} = -\rho \cdot \frac{\cos(x)+\cos(y)}{1+t^2},
\end{dcases}
\end{equation*}
where $t\in [0,T]$. We train the SA-NODE using samples drawn from a simple initial distribution: a uniform measure on the set \(K = [-4,4]^2\),
\begin{equation}
\label{eq:trans_init_train}
    \rho_0^{\text{train}}(x,y)= 0.5,  \quad (x,y)\in [-4,4]^2.
\end{equation}
On the other hand, to evaluate the performance of the trained model, we adopt a different initial distribution for testing: a truncated Gaussian measure on \(K\),
\begin{equation}
\label{eq:trans_init_test}
    \rho_0^{\text{test}}(x,y)=e^{-\frac{x^2+y^2}{4}},   \quad (x,y)\in [-4,4]^2.
\end{equation}
Let \(\rho_{\Theta}\) and \(\rho\) be solutions of the neural and the true transport equation, respectively, both initialized with the same data measure \eqref{eq:trans_init_test}. To quantify the approximation performance of $\rho_{\Theta}$, we define the following normalized testing error for each time step \( t \in [0,5] \):

\begin{equation*}
    e_{\text{test}} (t) = \|\bar{\rho}_{\Theta}(\cdot,t) - \bar{\rho}(\cdot, t)\|_{\mathbb{L}^1}, \; \text{where }\bar{\rho}_{\Theta}(\cdot,t)= \frac{\rho_{\Theta}(\cdot,t)}{\|\rho(\cdot,0)\|_{\mathbb{L}^1}} \; \text{and}\; \bar{\rho}(\cdot,t) =\frac{\rho(\cdot,t)}{\|\rho(\cdot,0)\|_{\mathbb{L}^1}}.
\end{equation*}
Solutions $\rho_{\Theta}$ and $\rho$ share the same normalization factor because of the positivity and identical form of the initial measure, along with the mass conservation property of the transport equation.
Here, we measure errors in the $\mathbb{L}^1$ norm rather than the Wasserstein-1 distance $\mathbb{W}_1$ (as in Theorem~\ref{th:Trans}), for the following reasons:
\begin{enumerate}
  \item The initial distributions are absolutely continuous and compactly supported, so the solutions remain in $\mathbb{L}^1(\R^2)$ at all times. Computing the $\mathbb{L}^1$ distance is substantially simpler than evaluating $\mathbb{W}_1$, which entails solving a numerical optimal transport problem.
  \item The $\mathbb{W}_1$ error can be bounded by the $\mathbb{L}^1$ error via
  \[
   \mathbb{W}_1\bigl(\bar{\rho}_{\Theta}(\cdot,t),\,\bar{\rho}(\cdot,t)\bigr)
    \;\le\;
    \frac{\operatorname{diam}(\Omega_t)}{2}\,
    \bigl\lVert \bar{\rho}_{\Theta}(\cdot,t)-\bar{\rho}(\cdot,t)\bigr\rVert_{\mathbb{L}^1},
  \]
  where $\mathrm{diam}(\Omega_t)$ denotes the diameter of the common support of $\bar{\rho}_{\Theta}(\cdot,t)$ and $\bar{\rho}(\cdot,t)$, which remains finite by Grönwall’s lemma.
\end{enumerate}

\medskip
For the training dataset, initial locations are sampled on the grid $[-4,4]^2$ with spacing 0.2 (1681 trajectories). For the testing dataset, to assess generalization over the state space, we use a denser grid $[-4,4]^2$ with spacing $0.1$, yielding $6561$ initial conditions and corresponding trajectories for testing.

In Figure~\ref{fig:trans2D}, we display, from top to bottom, the solution obtained by the SA-NODE, the solution obtained by the vanilla NODE, and the exact solution of the transport equation on the domain \((x,y) \in [-4,4]^2\) at 51 equispaced time points \(t \in [0,5]\) (including the two extrema 0 and $T$). Figure~\ref{fig:trans_2D_exam1} presents the corresponding approximation errors: the left panel shows the training and testing errors of the SA-NODE, while the right panel compares the testing performance of the SA-NODE and the vanilla NODE.

From Figure~\ref{fig:trans2D}, we observe that both neural models provide good approximations of the true dynamics. In Figure~\ref{fig:trans_2D_exam1}, the testing errors remain consistently low, on the order of less than \(10^{-1}\). The right panel clearly shows that the vanilla NODE performs worse than the SA-NODE in the early stages, though both models eventually converge to similar accuracy. This highlights the stability advantage of the SA-NODE. Moreover, since the theoretical error (see Theorems~\ref{th:rates} and~\ref{th:Trans}) is defined as the maximum over time, the SA-NODE yields better approximation performance in this robustness sense.

\begin{figure}
     \centering
	\includegraphics[width=\textwidth]{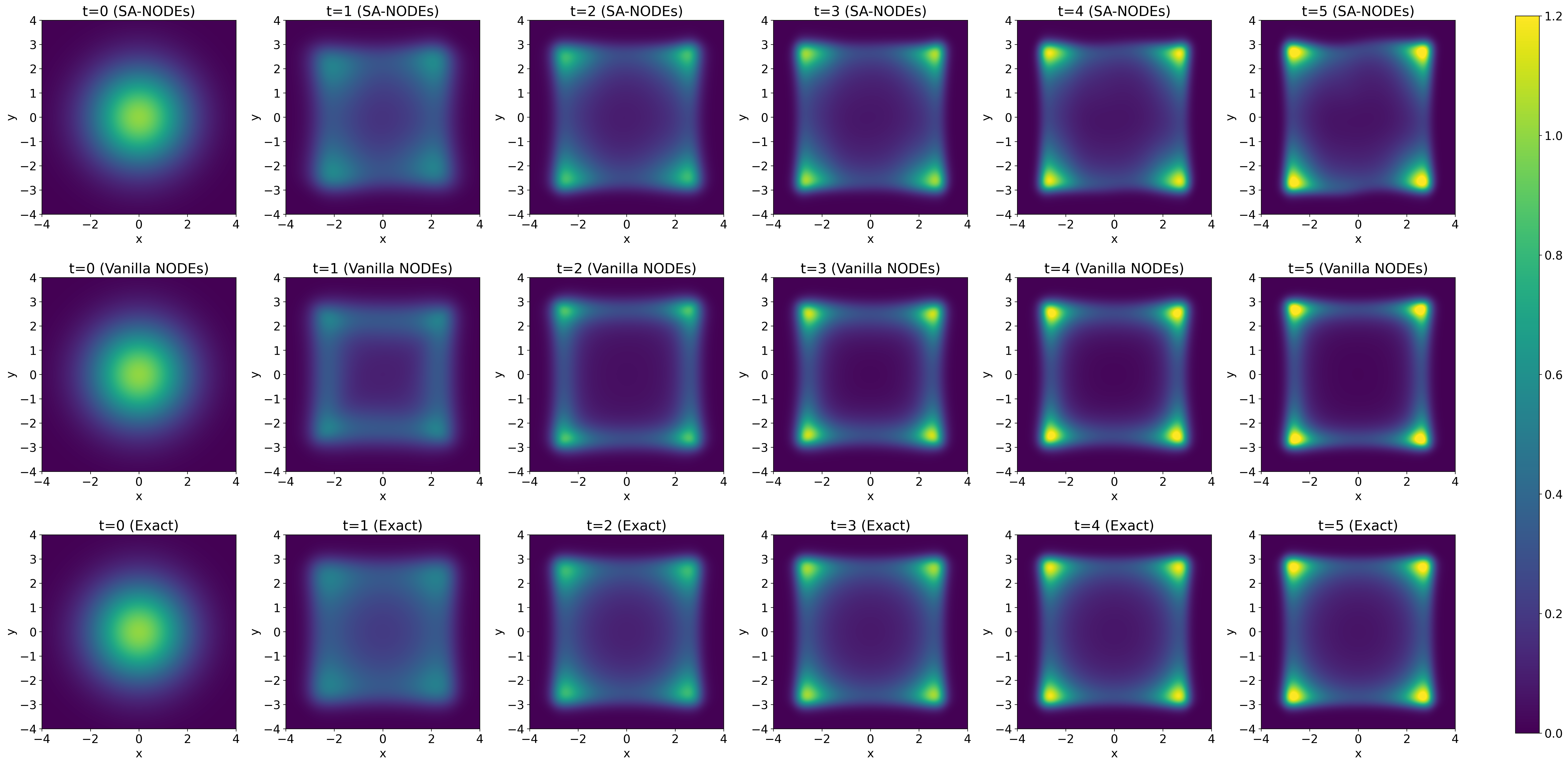}
     \caption{SA-NODEs, vanilla NODEs and exact solutions of transport equation \eqref{eq:trans_exam} with initial measure \eqref{eq:trans_init_test}.}
     \label{fig:trans2D}
\end{figure}

\begin{figure}
\centering
\begin{subfigure}[t]{0.48\textwidth}
\includegraphics[width=\linewidth]{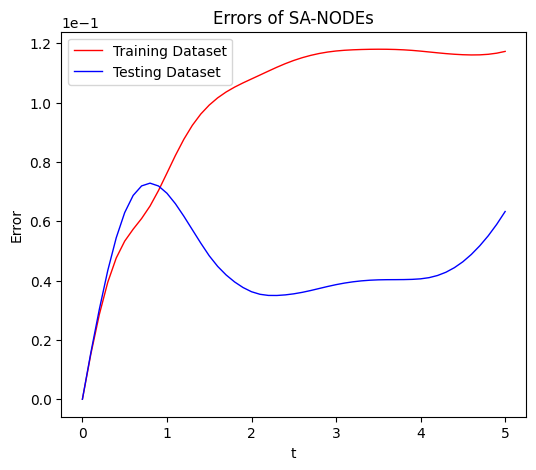}
\label{fig:trans_2D_exam1_SA}
\end{subfigure}
\begin{subfigure}[t]{0.49\textwidth}
\includegraphics[width=\linewidth]{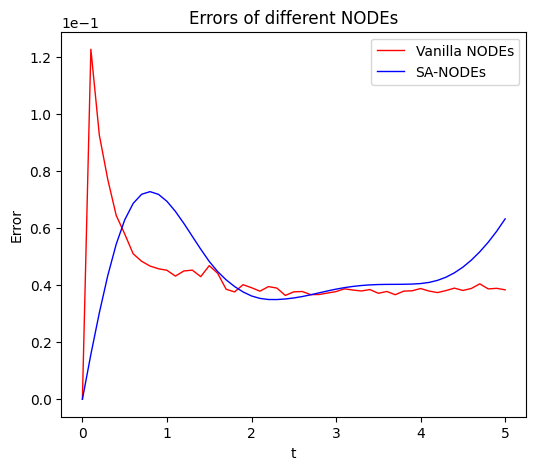}
\label{fig:trans_2D_exam1_compare}
\end{subfigure}
\caption{Training and testing errors of SA-NODEs and comparison with vanilla NODEs on testing errors for transport equation \eqref{eq:trans_exam}.} \label{fig:trans_2D_exam1}
\end{figure}

\bigskip
\noindent
\textbf{Example 4: Doswell Frontogenesis}
\medskip

\noindent We now consider the two-dimensional Doswell frontogenesis equation \cite{doswell1984kinematic, morales2019adjoint}. This model describes the insurgence and evolution of horizontal temperature gradients and fronts within meteorological dynamics. The equation reads:
\begin{equation}
\label{eq:trans_Doswell}
\begin{dcases}
    \partial_t \rho(x,y,t) + {\rm{div}} \left(\left(
    -yg(r(x,y)) , xg(r(x,y))\right)\rho(x,y,t) \right)= 0, \quad \text{in }\, \R^2\times  [0,T],\\
    \rho(\cdot,0) = \rho_0,
\end{dcases}
\end{equation}
where
\begin{equation} 
    g(r(x,y))=\frac{1}{r(x,y)}\ \overline{v}\ {\text{sech}}^2{(r(x,y))} \tanh{(r(x,y))},
\end{equation}
with $r(x,y)=\sqrt{x^2+y^2}$ and $\overline{v}=2.59807$. 
The initial measures for the training and testing are set as: 
\begin{equation*}
    \rho_0^{\text{train}}(x,y)=\tanh \left(y\right), \quad \rho_0^{\text{test}}(x,y)=\tanh \left(10 \, y\right).
\end{equation*}
With this choice of initial data, the exact solution of \eqref{eq:trans_Doswell} can actually be computed by hand:
\[
u(x,y,t) = \tanh\left(\frac{y \cos(g(r)t) - x\sin(g(r)t)}{\delta}\right),
\]
where $\delta = 1$ for $\rho_0^{\text{train}}$ and $\delta = 1/10$ for $\rho_0^{\text{test}}$.

To generate the training and testing datasets, we define the domain $K=[-5,5]^2$ and sample initial conditions on regular grids with spacing $0.2$ (yielding $2601$ trajectories) for training, and spacing $0.1$ (yielding $10201$ trajectories) for testing. The time discretization is the same as in the previous experiment.

Figure~\ref{fig:trans2D_Doswell} displays the SA-NODE solution, the vanilla NODE solution, and the exact solution corresponding to the testing initial measure. A near-perfect alignment is observed across the entire time horizon \([0,4]\). The error curves (defined as in Example~3) are presented in Figure~\ref{fig:trans_2D_exam2}, showing a consistently low error level on the order of \(10^{-3}\). Once again, the right panel highlights the instability of the vanilla NODE, where a noticeable spike in error occurs around time \(t = 3.7\). This further demonstrates that the SA-NODE approximation has better robustness and stability.

\begin{figure}
     \centering
	\includegraphics[width=\textwidth]{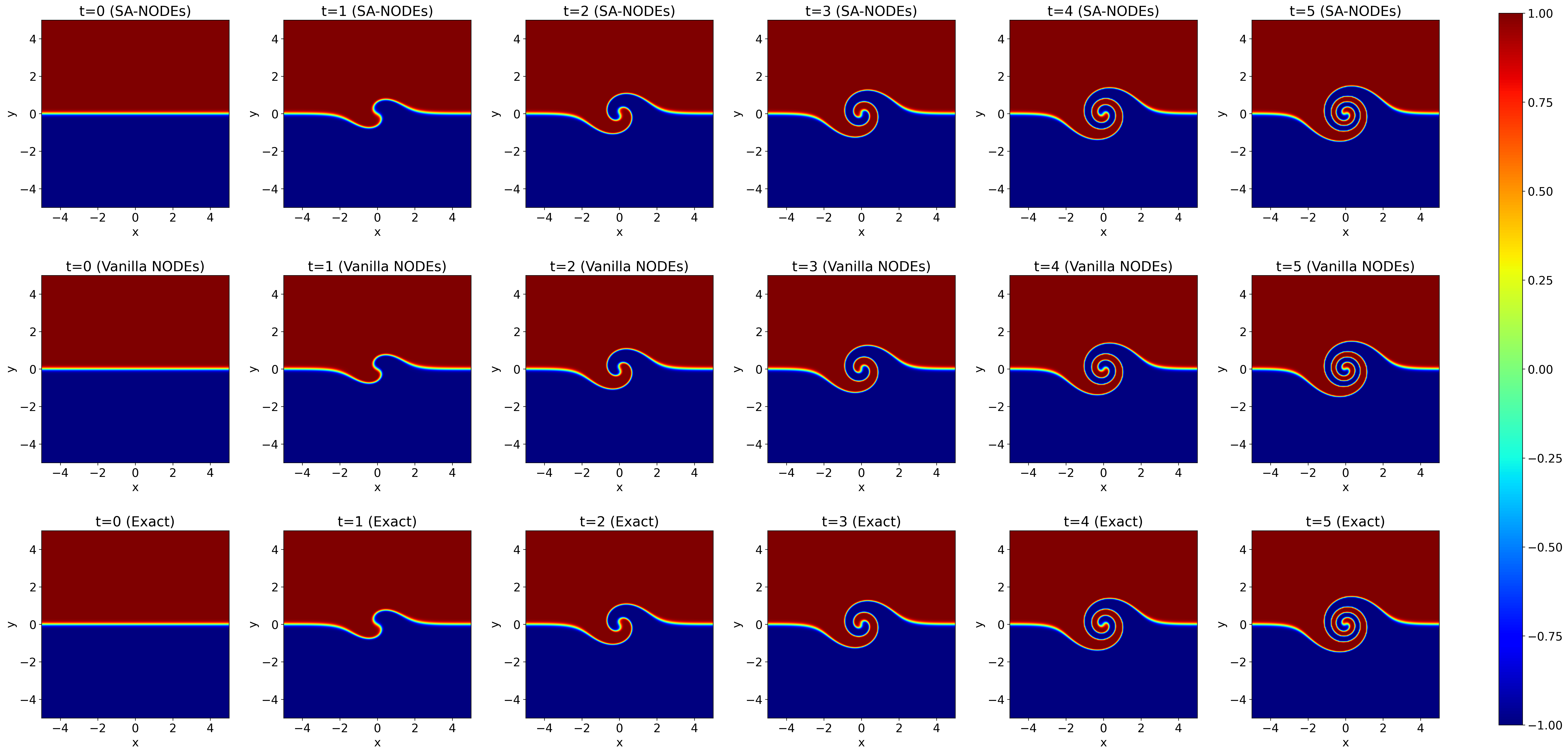}
     \caption{SA-NODEs, vanilla NODEs and exact solutions of transport equation \eqref{eq:trans_Doswell} with the testing initial measure.}
     \label{fig:trans2D_Doswell}
\end{figure}

\begin{figure}
\centering
\begin{subfigure}[t]{0.50\textwidth}
\includegraphics[width=\linewidth]{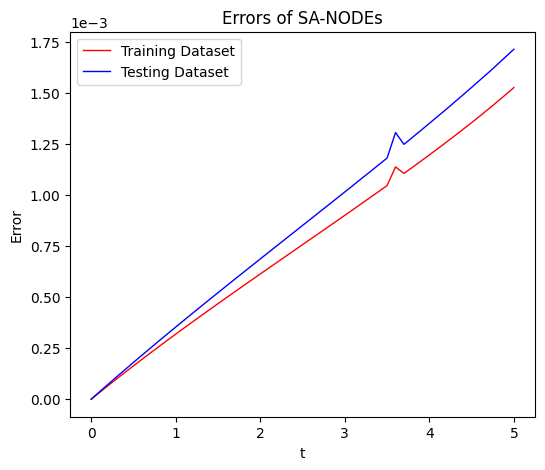}
\label{fig:trans_2D_exam2_SA}
\end{subfigure}
\begin{subfigure}[t]{0.48\textwidth}
\includegraphics[width=\linewidth]{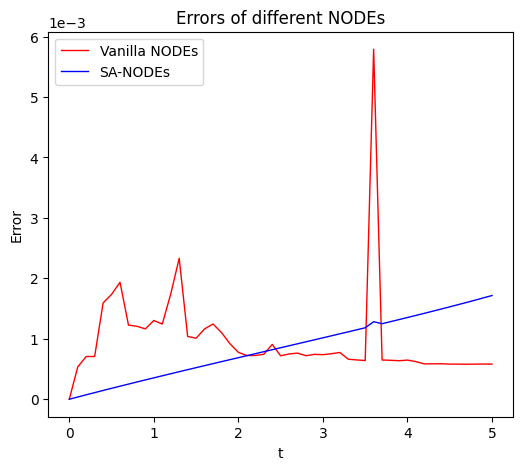}
\label{fig:trans_2D_exam2_compare}
\end{subfigure}
\caption{Training and testing errors of SA-NODEs and comparison with vanilla NODEs on testing errors for transport equation \eqref{eq:trans_Doswell}.} \label{fig:trans_2D_exam2}
\end{figure}

\section{Conclusions and future works}\label{sec:conclusion}

\noindent
In this paper, we have introduced SA-NODEs, a novel framework for modeling and approximating dynamical systems. Our theoretical analysis establishes the universal approximation properties and convergence rate of SA-NODEs, demonstrating their ability to approximate dynamical systems. We have highlighted that training SA-NODEs is akin to solving an optimal control problem, where the objective is to reconstruct the underlying dynamical system.

The numerical experiments validate the effectiveness of SA-NODEs across various scenarios, including linear and nonlinear ODE systems and transport equations. The results show that SA-NODEs consistently outperformed vanilla NODEs in terms of accuracy and computational efficiency. This superior performance is attributed to the reduced complexity of SA-NODEs, which require fewer parameters and training epochs compared to their vanilla counterparts. Furthermore, SA-NODEs exhibited robust generalization capabilities, maintaining low error rates even with limited training datasets. 

The novelty of the SA-NODE framework opens up several possibilities for future investigation. 
A first research direction may focus on improving the results obtained in this work in the case of specific dynamical systems, for example, gradient systems (e.g.\@ Hamiltonian system), equations exhibiting periodical dynamics, autonomous systems, etc. In other words, it would be interesting to study to what extent SA-NODEs are able to capture distinct properties of the dynamical system generating the data, and whether it is possible to achieve better approximation results in specific situations. For instance, in the Hamiltonian setting, results from a recent work \cite{gonon2023approximation} can be applied to achieve a more precise approximation in the probabilistic sense. 
Besides, in the autonomous case, as mentioned in Remark \ref{rem:autonomous}, the SA-NODE also becomes autonomous. Consequently, further studies on the relation between the approximation quality and stability of both the original and neural systems can be conducted.

A second path of exploration involves the predictive properties of SA-NODEs. Indeed, since the coefficients are fixed in time, it is theoretically possible to solve the SA-NODE for times that exceed the time $T$ up to which data were available, effectively predicting the dynamics. This property is exclusive to SA-NODEs, and studying to which extent these equations are able to stay close to the real dynamics after time $T$ is a very enticing question. This prediction task is closely related to the well-known recurrent neural network (RNN) for time series. A particular type of RNN, known as echo state networks (ESNs), prohibits UAP for discrete dynamical systems in the infinite time horizon, as demonstrated in \cite{grigoryeva2018echo}.  Nevertheless, infinite-horizon estimates might be obtained in some cases for SA-NODE, by relying on the theory of Lyapunov Exponents, much in the same spirit of \cite{berry2023learning, grigoryeva2024forecasting}. In future work, we can adapt ESNs to the continuous-time scenario and compare their prediction performances with those of the SA-NODE.

In this work, we primarily focus on ODE systems and associated transport equations. However, the applicability of SA-NODEs extends beyond this setting: they can also be employed to interpolate data from more general dynamical systems that are not necessarily governed by ODEs. In this way, our approach reduces the complexity of capturing the main components of a complex dynamical system within an ODE-based framework. For example, SA-NODEs can be used to reconstruct an underlying deterministic ODE model from data produced by a randomly perturbed version of the system. Looking forward, this perspective opens several promising directions, including the development of stochastic or hybrid extensions of SA-NODEs capable of handling richer classes of dynamical behaviors, and the exploration of their role as interpretable, data-driven models in scientific machine learning.

\bibliographystyle{abbrv}
\bibliography{references}
\end{document}